\title{Multiple Dedekind Zeta Functions}
\author{Ivan Horozov, Zhenbin Luo}
\date{October 23, 2013}
\newcommand \nc {\newcommand}
\nc \proof {\noindent {\em{Proof.\/ }}} \nc \qed {$\Box$\hfill}
\newtheorem{theorem}{Theorem}[section]
\newtheorem{lemma}[theorem]{Lemma}
\newtheorem{proposition}[theorem]{Proposition}
\newtheorem{corollary}[theorem]{Corollary}
\newtheorem{definition}[theorem]{Definition}
\newtheorem{example}[theorem]{Example}
\newtheorem{remark}[theorem]{Remark}
\newtheorem{conjecture}[theorem]{Conjecture}
\newtheorem{question}[theorem]{Question}
\nc \bth[1] {\begin{theorem}\label{t#1} } \nc \ble[1]
\nc \bpr[1]
\nc \bco[1]
\nc \bde[1]
\nc \bex[1]
\nc \bre[1]
\nc \bcon[1]
\nc \bque[1]
\nc {\eth} { \end{theorem} } \nc {\ele} { \end{lemma} } \nc
\nc {\eco} { \end{corollary} } \nc
\nc {\eex} { \end{example} } \nc {\ere}
\nc {\econ} { \end{conjecture} } \nc {\eque}
\newcommand{\gte}{\grave{\text{e}}}
\newcommand{\ccs}{\text{Contou-Carr$\gte$re symbol\ }}
\def\ga{\gamma}
\def\e{\epsilon}
\def\om{\omega}
\def\sg{\sigma}
\def \P {{\mathbb P}}
\def \Z {{\mathbb Z}}
\def \C {{\mathbb C}}
\begin{document}

\title{{\LARGE\bf{
On the Contou-Carrere Symbol for Surfaces} 
}}

\maketitle

\begin{abstract}
This is a preliminary report on the Contou-Carr\'ere symbol for surfaces. It consists of two parts. In the first part, we recall technical results needed to define the symbol. The second part is where we compute all components of the Coutou-Carr\'ere symbol for surfaces, using iterated integrals over membranes.
\end{abstract}

Key words: reciprocity laws, complex algebraic surfaces, iterated integrals

MSC2010: 14C30, 32J25, 55P35.

\tableofcontents
\setcounter{section}{-1}

\section{Introduction}
In this paper we give a definition of a two dimensional Contou-Carrere symbol. Previously, Pablos Romo has worked on that topic. He gave a definition whose ingredients resemble powers of the one dimensional Contou-Carrere symbol. Then Osipov and Zhu gave another definition, in which they considered one more key ingredient of the Countou-Carrere symbol. Here, we find one more ingredient  for the Contou-Carrere symbol (Case 2, page 15 or $Q_1$ and $Q_2$ in Definition 2.2).

We can consider the results here as a continuation of \cite{rec2}. In order to construct the Contou-Carrere symbol, we use iterated integrals over membranes, which are a higher dimensional generalization of iterated path integrals.  The geometric constructions that we use here are established in the paper  \cite{rec2}. They allow us to prove reciprocity laws for the symbols. However, in this preliminary version, we only state the Contou-Carrere symbol.

In the last Subsection we compute an example of a symbol from more complicated factors.
We are going to prove reciprocity laws for symbols coming from more complicated factors such as rational functions over $\P^2.$

Let $f$ and $g$ be two non-zero meromorphic functions on a Riemann surface $\Sigma$. The tame symbol of $f$ and $g$ at a point $x\in \Sigma$ is given by
$$(f,g)_x = (-1)^{\nu(f) \nu(g)} [g^{\nu(f)}/f^{\nu(g)}](x)$$
where $\nu(f)$ is the valuation of $f$ at $x$, in other words, it is the order of zero of $f$ at $x$, or the opposite of the order of pole of $f$ at $x$. Since the valuation of $g^{\nu(f)}/f^{\nu(g)}$ is zero, the quotient function is holomorphic at $x$ and the symbol is well defined. This symbol is anti-symmetric and, although not obvious, bi-multiplicative and satisfies the Steinberg property:

$$(f,1-f)_x = 1$$
for any $f$ not equal to 0 or 1. When the surface $\Sigma$ is compact,  this symbol satisfies the following product formula:

$$\prod_{x \in \Sigma} ( f, g)_x = 1$$
which generalizes the Weil reciprocity law \cite{Mi},  \cite{W} $f(\text{div}(g)) = g(\text{div}(f))$, for the case when $f$ and $g$ have disjoint divisors.

 This symbol is later greatly generalized by \ccs \cite{Co} to the case where the base ring for the surface is not the complex number $\mathbb{C}$ but a local artinian ring. Let $A$ be an artinian local ring with maximal ideal $m$. Let $f, g$ be two functions in $A((X))^{\times}. $ Then they can be written uniquely as
$$f=a_0 X^{\nu(f)} \prod_{i=-\infty}^{+\infty} (1-a_i X^i)$$
$$g=b_0 X^{\nu(g)} \prod_{i=-\infty}^{+\infty} (1-b_i X^i)$$
where $(f,g)$ is the greatest divisor of $f$ and $g$, $\nu_f, \nu_g \in \Z$, $a_i, b_i \in A$ for $i>0$, $a_0, b_0 \in A^{\times}$, $a_i, b_i \in m$ for $i<0$, and $a_i, b_i$ are zero when $i \ll 0$.
Then the symbol is given by
\begin{eqnarray}
<f,g>_{A((X))^{\times}} :=\\
:= (-1)^{\nu_f \nu_g} \dfrac{ a_0^{\nu_g} \prod_{j=1}^{\infty}\prod_{k=1}^{\infty} (1 - a_{j}^{k /(j,k)}b_{-k}^{j/(j,k)})^{(j,k)}}{b_0^{\nu_f} \prod_{j=1}^{\infty}\prod_{k=1}^{\infty} (1 - a_{-j}^{k /(j,k)}b_k^{j/(j,k)})^{(j,k)}}
\end{eqnarray}
Since $a_{-i}, b_{-i}$ are zero when $i$ is large, the product is actually finite, hence the definition makes sense.

There is also an analogous reciprocity law satisfied by this Contou-Carr\'ere symbol. Let $A$ be a finite local artinian $\mathbb{C}$-algebra, finitely generated as a $\mathbb{C}$-module, and let  $\Sigma'$ be a surface
over $A$. Let $f$ and $g$ be two non-zero meromorphic functions on $\Sigma'$. Then locally at any point $x$, $f$ and $g$ can be identified as elements of the ring $A((x))^\times$. Then we can define the \ccs described above. This symbol satisfies the product formula

$$\prod_{x \in \Sigma'} <f,g>_x = 1.$$

If we take $A=\mathbb{C}$, then the \ccs reduces to the tame symbol and the above product formula becomes the Weil reciprocity formula. Hence the \ccs is a natural extension of the tame symbol.

In the paper \cite{Luo}, the second author constructs the \ccs in terms of Chen iterated integrals and prove the corresponding reciprocity using a geometric property of iterated integrals. Consider a local artinian $\C$-algebra $A$, finitely generated by nilpotent elements $\{a_1, a_2, \cdots, a_n\}$ as a $\C$-module. Let $\om$ be a meromorphic 1-form on a compact Riemann surface $X$ with coefficients in $A$, in other words, it is of the form $\sum_{i=0}^n a_i \om_i$, where $a_0=1$ and $\om_i$'s are regular meromorphic 1-forms. Let $\ga$ be a path on the Riemann surface. Define the integral of $f$ by extending the regular integral linearly:
$$\int_\ga \om = \sum_{i=0}^n a_i \int_\ga \om_i.$$
In particular, we consider differential forms of the type $\frac{df}{f}$, where $f$ is a meromorphic function on $X$ with coefficients in $A$. Let $f$ and $g$ be two such functions, and $s$ be a zero or pole of $f$ or $g$ or both. Write $f$ and $g$ in Laurent series in
powers of a uniformizer $x_s$, as elements in $A((x_s))^{\times}$. Thus, as mentioned above, we can express $f$ and $g$ locally as infinite products. Let $\sigma$ be a simple loop that starts and
ends at some fixed point $P$, go around the divisor $s$ once in the counterclockwise
direction but not any other divisors of $f$ or $g$. Then we have the
following result:

\begin{eqnarray}
\exp\left(\frac{1}{2\pi i}\int_{\sg} \frac{df}{f} \circ \frac{dg}{g}\right) 
=\\
=
 (-1)^{\nu_f \nu_g} \frac{g(P)^{\nu_f} a_0^{\nu_g} \prod_{j=1}^{\infty}\prod_{k=1}^{\infty} (1 - a_{j}^{k /(j,k)}b_{-k}^{j/(j,k)})^{(j,k)}}{f(P)^{\nu_g}b_0^{\nu_f} \prod_{j=1}^{\infty}\prod_{k=1}^{\infty} (1 - a_{-j}^{k /(j,k)}b_k^{j/(j,k)})^{(j,k)}}
\end{eqnarray}

This formula differs from the \ccs symbol only by $g(P)^{\nu_f}/f(P)^{\nu_g}$, but since the point $P$ is fixed, we can rescale the functions $f$ and $g$ so that $f(P)=g(P)=1$. Then we can get the usual \ccs. This gives us a new interpretation of the $\ccs$ in terms of exponential
of an iterated integral, which is very convenient to use. Many properties of the symbol easily follow from the properties of iterated integrals. For example, it is easy to see from this definition that the symbol is bi-multiplicative and anti-symmetric. It is actually also convenient to prove the Steinberg property using this new description of the \ccs.

 We can also reproduce the product formula. If we consider the iterated integral over a loop that represents a relation of the fundamental group, by the homotopy invariance of iterated integral, the result is 0. Using this, we can reprove the reciprocity of \ccs easily.

In the two dimensional setting we do have a similar infinite product formulas. We have 
\[f_1=a_1x^{\nu_1(f_1)}y^{\nu_2(f_1)}\prod_{i_1>-N}\prod_{j_1>-N_{i_1}}(1-a_{i_1,j_1}x^{i_1}y^{j_1}),\]
\[f_2=a_2x^{\nu_1(f_2)}y^{\nu_2(f_2)}\prod_{i_2>-N}\prod_{j_2>-N_{i_2}}(1-a_{i_2,j_2}x^{i_2}y^{j_2}),\]
\[f_3=a_3x^{\nu_1(f_3)}y^{\nu_2(f_3)}\prod_{i_3>-N}\prod_{j_3>-N_{i_3}}(1-a_{i_3,j_3}x^{i_3}y^{j_3}).\]
For each triple of simple factors of the  above infinite products, we compute the corresponding symbol in Section 2. We have separated the computation into 8 cases. At the end of the computation we define the \ccs in Definition 2.2.

\section{Algebraic, Geometric and Analytic background}
\subsection{Infinite product formulas}

\smallskip
   Let $A$ be a commutative ring with unit. Let $I$ be a nilpotent
   ideal. Let $\Gamma(A,I)$ be the set of power series $f = \sum_{i=-\infty}^{\infty}a_i t^i \in
   A((t))$ such that for some integer $\omega = \omega(f) =
   \omega_{A,I}(f)$ we have $a_w\in A^{\times}$ and $a_i\in I$ for
   $i<\omega$. The set $\Gamma(A,I)$ is closed under power series
   multiplication and forms a group. Let $\Gamma_0(A,I)=\{ f\in\Gamma(A,I)\mid
   \omega(f)=0\}$. It is a subgroup of $\Gamma(A,I)$. Let
   $\Gamma_-(A,I)$ be the subgroup consisting of the elements of the
   form $1+f$ where $f \in t^{-1}I[t^{-1}]$. Let $\Gamma_+(A,I)$ be
   the subgroup of $\Gamma_0(A,I)$ consisting elements which have
   no negative powers of t. Then $\Gamma_+(A,I) = A[[t]]^{\times}$.
   Given rings $A$ and $B$ with nilpotent ideal $I\subset A$ and $J\subset
   B$, and a ring homomorphism $\phi: A \rightarrow B$ such that $\phi (I) \subset
   J$, then if we define the corresponding group homomorphism $\Gamma(\phi): \Gamma(A,I) \rightarrow
   \Gamma(B,J)$ by sending $\sum_i a_i t^i$ to $\sum_i
   \phi(a_i)t^i$, then the construction $\Gamma$ becomes a functor.
   Similarly, $\Gamma_\pm$ and $\Gamma_0$ are functors.

   The following proposition is proved by Pablos Romo\cite{R2}.

\begin{proposition}
  For all $f \in \Gamma(A,I)$, there exist unique coefficients
$\{a_i\}_{i=-\infty}^\infty$ in $A$ satisfying $a_0 \in A, a_i \in I$
for $i < 0, a_i = 0$ for $i \leq 0$, such that $$f = a_0 t^{w(f)}
\displaystyle\prod_{i=1}^\infty(1-a_i t^i)
\displaystyle\prod_{i=1}^\infty(1-a_{-i}t^{-i})$$. 
\end{proposition}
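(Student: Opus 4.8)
The plan is to peel off the leading monomial so as to land in $\Gamma_0(A,I)$, and then to split the factorization into its negative-power and nonnegative-power parts, each handled by a recursion on degree. Concretely, by the definition of $\Gamma(A,I)$ the coefficient $a_{\omega(f)}$ is a unit, so I set $a_0:=a_{\omega(f)}\in A^\times$ and $\tilde f:=a_0^{-1}t^{-\omega(f)}f\in\Gamma_0(A,I)$; this $\tilde f$ has constant term $1$ and, since $f\in A((t))$, only finitely many terms of negative degree, each with coefficient in $I$. As $a_0$ and $\omega(f)$ are visibly determined by $f$, it suffices to factor $\tilde f$.

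The heart of the argument is the decomposition $\Gamma_0(A,I)=\Gamma_-(A,I)\times\Gamma_+(A,I)$: every $\tilde f$ as above can be written uniquely as $g_-g_+$ with $g_-\in\Gamma_-(A,I)$ and $g_+\in\Gamma_+(A,I)=A[[t]]^\times$ of constant term $1$. Since $\Gamma(A,I)$ is abelian and $\Gamma_-\cap\Gamma_+=\{1\}$, uniqueness is immediate once existence is known. To produce $g_-$ I would clear the finite negative tail of $\tilde f$ one power of $I$ at a time: working modulo $I^2$, multiply $\tilde f$ by elementary factors $1-e\,t^{-j}\in\Gamma_-$ that cancel each negative coefficient, the cross terms landing in $I^2$; then repeat modulo $I^3$, and so on. Nilpotency of $I$ guarantees that this stabilizes after finitely many rounds, the product of all the factors used being a genuine element of $\Gamma_-(A,I)$, and what survives is $g_+\in\Gamma_+$.

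It then remains to expand each factor. For $g_+\in 1+tA[[t]]$ I would use the standard recursion: with $a_1,\dots,a_{n-1}$ already fixed, the coefficient of $t^n$ in $g_+\prod_{i=1}^{n-1}(1-a_it^i)^{-1}$ forces a unique $a_n\in A$, and the resulting product $\prod_{i\ge1}(1-a_it^i)$ converges $t$-adically to $g_+$. The factor $g_-\in\Gamma_-(A,I)$ is treated symmetrically in the variable $t^{-1}$, reading off each $a_{-i}\in I$ from the coefficient of $t^{-i}$; the new point is that only finitely many $a_{-i}$ are nonzero, which I would again extract from nilpotency (e.g.\ by the same passage through $A/I^{j}$: after the $j$-th stage the unfactored remainder has all coefficients in $I^{j}$, so the process halts by stage $m$ when $I^m=0$). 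Assembling the three pieces yields the asserted product, and uniqueness propagates through each recursion.

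The main obstacle is precisely the negative direction. The positive recursion is purely formal and uses nothing about $I$; all the genuine content sits in separating the negative tail from the positive part and in showing its factorization terminates. Both facts hinge on $I^m=0$, and the delicate bookkeeping is to keep the $t^{-1}$-degrees finite while the coefficients descend into ever higher powers of $I$ — this is exactly what the successive approximation modulo $I,I^2,\dots,I^m$ is designed to control.
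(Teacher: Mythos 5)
Your overall architecture coincides with the paper's: peel off $a_0t^{\omega(f)}$, split $\Gamma_0(A,I)=\Gamma_-(A,I)\cdot\Gamma_+(A,I)$ (the paper's Lemma 1.2), then factor the $\Gamma_+$ part by the coefficient-by-coefficient recursion and the $\Gamma_-$ part in the variable $t^{-1}$ (the paper's Lemma 1.3, whose last clause, for elements of $1+I[t]$, is exactly what handles the negative tail). The paper does not prove these two lemmas, citing Anderson--Pablos Romo instead, so the only place your proposal can be judged beyond the paper is your sketch of the splitting lemma --- which you correctly identify as the heart of the matter, and which is where there is a real problem.

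The claim ``working modulo $I^2$, multiply $\tilde f$ by elementary factors $1-e\,t^{-j}\in\Gamma_-$ that cancel each negative coefficient, the cross terms landing in $I^2$'' is false as stated. Write $\tilde f=1+n+p$ with $n$ the negative tail (coefficients in $I$) and $p\in tA[[t]]$. The cross term of $-e\,t^{-j}$ with a monomial $c_kt^k$ of $p$ has coefficient $-ec_k\in I\cdot A=I$, not $I^2$, and when $k<j$ it sits in negative degree: for instance $(1+at^{-2}+ct)(1-at^{-2})=1-ac\,t^{-1}+ct-ac\,... -a^2t^{-4}$ still has the term $-ac\,t^{-1}$ with coefficient only in $I$. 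So a single pass of elementary factors does not clear the tail modulo $I^2$, and your induction on powers of $I$ never gets started. The repair is a double induction: within each round you must cancel coefficients starting from the most negative degree and iterate, using the fact that the offending cross terms with $p$ occur only in degrees strictly greater than $-j$, so after at most $N$ passes ($N$ the pole order) the tail surviving modulo the next power of $I$ is gone; only the cross terms with $n$ itself (which do lie in $I^2$, though possibly in more negative degrees) are handed to the next round, and nilpotency bounds the number of rounds. This degree-by-degree organization, followed by induction on the nilpotency exponent, is precisely how the paper proves its own two-dimensional analogue (Lemma 1.6, for $f\in 1-t_2^{-1}I((t_1))[t_2^{-1}]$: clear the $t_2^{-1}$ coefficient, then $t_2^{-2}$, and only then invoke the induction hypothesis on $I^2$). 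With that reordering your argument closes; without it, the step as written fails.
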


\bigskip
The coefficients $\{a_i\}_{i=-\infty}^\infty$ are called the
family of \emph{Witt parameters} of $f \in \Gamma(A,I)$

The following two lemmas are used to prove the proposition.

\begin{lemma}
 For all $f \in \Gamma(A,I)$, there exists unique  $g \in \Gamma_+(A,I), h
\in \Gamma_-(A,I)$, such that $f = g\cdot h$.
 \end{lemma}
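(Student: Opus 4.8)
The plan is to separate uniqueness from existence, the former being immediate and the latter a Birkhoff-type factorization obtained by induction on the nilpotency length of $I$. Throughout I would use that $A((t))$ is commutative, so $\Gamma(A,I)$ is an abelian group and no ordering issues arise when recombining factors. I would first observe that $\Gamma_+(A,I)$ and $\Gamma_-(A,I)$ both lie in $\Gamma_0(A,I)$, so any product $g\cdot h$ with $g\in\Gamma_+$ and $h\in\Gamma_-$ has $\omega=0$; hence it suffices to produce the factorization for $f\in\Gamma_0(A,I)$, the monomial $a_0t^{\omega(f)}$ being handled separately as in the Proposition.

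For uniqueness, suppose $gh=g'h'$ with $g,g'\in\Gamma_+$ and $h,h'\in\Gamma_-$. Then $g'^{-1}g=h'h^{-1}$, the left side lying in $\Gamma_+$ and the right side in $\Gamma_-$. I would check that $\Gamma_+(A,I)\cap\Gamma_-(A,I)=\{1\}$: an element of $\Gamma_+$ is a power series with unit constant term, while an element of $\Gamma_-$ has constant term $1$ and only strictly negative powers beyond it, so an element of both must equal $1$. This forces $g=g'$ and $h=h'$.

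For existence I would induct on the least $k$ with $I^k=0$. The case $k=1$ (so $I=0$) is trivial, since then $\Gamma_-=\{1\}$ and $f\in\Gamma_0(A,0)=A[[t]]^\times=\Gamma_+$. For the inductive step, set $N=I^{k-1}$, so that $N^2\subseteq I^{2k-2}\subseteq I^{k}=0$, and pass to $A'=A/N$ with ideal $I'=I/N$, which satisfies $(I')^{k-1}=0$. Applying the functor $\Gamma$ to $A\to A'$ sends $f$ to some $f'\in\Gamma_0(A',I')$, which by the inductive hypothesis factors as $f'=g'h'$. I would then lift $g'$ and $h'$ to $g_0\in\Gamma_+(A,I)$ and $h_0\in\Gamma_-(A,I)$ coefficientwise, lifting the unit constant term of $g'$ to a unit of $A$ (automatic, as $N$ is nilpotent) and lifting the finitely many coefficients of $h'$ from $I'$ back into $I$. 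Setting $r:=(g_0h_0)^{-1}f$, the element $r$ reduces to $1$ modulo $N$, i.e.\ $r=1+\rho$ with every coefficient of $\rho$ in $N$ and only finitely many negative powers.

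It then remains to factor this square-zero piece. Splitting $\rho=\rho_++\rho_-$ into its nonnegative- and strictly-negative-power parts and setting $g_1=1+\rho_+\in\Gamma_+$, $h_1=1+\rho_-\in\Gamma_-$ gives $g_1h_1=1+\rho+\rho_+\rho_-=1+\rho=r$, since $\rho_+\rho_-$ has coefficients in $N^2=0$. By commutativity $f=g_0h_0g_1h_1=(g_0g_1)(h_0h_1)$, and $g_0g_1\in\Gamma_+$, $h_0h_1\in\Gamma_-$ because both are subgroups, which is the desired factorization. I expect the only real care to lie in setting up the nilpotent descent cleanly—choosing $N=I^{k-1}$ precisely so the correction $r$ lands in a square-zero ideal—after which the base of the recursion is just the additive decomposition $A((t))=A[[t]]\oplus t^{-1}A[t^{-1}]$, and the recombination of factors is free since the ring is commutative.
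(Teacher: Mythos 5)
Your proof is correct, and in fact the paper never proves this lemma at all: it simply refers the reader to Anderson and Pablos Romo, so the only internal point of comparison is the paper's proofs of the two-dimensional analogues (Lemmas 1.5 and 1.6). Those proceed quite differently from what you do: there the factors are built explicitly, coefficient by coefficient, by repeatedly multiplying by invertible elementary terms that kill the lowest remaining power of $t_2$, with the nilpotency of $I$ used to make the negative-power process terminate and to run an induction that reduces $I$ to a smaller ideal. Your argument is instead a structural nilpotent-descent (Birkhoff-type) factorization: uniqueness from $\Gamma_+(A,I)\cap\Gamma_-(A,I)=\{1\}$ together with the subgroup property, and existence by inducting on the least $k$ with $I^k=0$, passing to $A'=A/I^{k-1}$, lifting the factorization (units lift across a nilpotent ideal, and the finitely many negative coefficients lift from $I'$ to $I$), and absorbing the square-zero discrepancy through the additive splitting $\rho=\rho_++\rho_-$ with $\rho_+\rho_-=0$. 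What your route buys is a short, self-contained proof that never touches individual coefficients; what the paper's peeling style buys (in the 2D case, and in the cited source for the 1D case) is an explicit construction of the factors as infinite products of elementary terms, which is exactly what produces the Witt parameters of Proposition 1.1 and its higher-dimensional analogues. A further merit of your write-up is that you noticed the statement cannot hold verbatim: any product $gh$ with $g\in\Gamma_+(A,I)$ and $h\in\Gamma_-(A,I)$ satisfies $\omega(gh)=0$, so the factorization can only exist on $\Gamma_0(A,I)$, and for general $f$ one must first split off the monomial $t^{\omega(f)}$ as in Proposition 1.1; your restriction to $\Gamma_0(A,I)$ is precisely the right repair, and all the individual steps of your induction check out.
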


\smallskip 
\begin{lemma}
 For all $f \in \Gamma_+(A,I)$, $f$ can be written as
$f(0)\prod_{i=1}^{\infty}(1-a_i t^i)$ for some 
$\{a_i\}_{i=1}^\infty$ uniquely determined by $f$. If $f \in 1 + t^n
A[[t]]^{\times}$, then $a_1=a_2=\cdots=a_{n-1}=0$. If $f \in 1 +
I[t]$, then $a_i \in I, \forall \ i,$ and $a_i=0$, for $i\ll 0$.
\end{lemma}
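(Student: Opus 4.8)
The plan is to reduce at once to the case $f(0)=1$. Since $f\in\Gamma_+(A,I)=A[[t]]^\times$, its constant term $f(0)$ lies in $A^\times$, so $f(0)^{-1}f\in 1+tA[[t]]$; writing $f(0)^{-1}f=1+\sum_{i\ge 1}c_it^i$, the goal becomes to produce coefficients $a_i$ with $\prod_{i=1}^\infty(1-a_it^i)=f(0)^{-1}f$, and to show these are unique. I would first note that the infinite product is a well-defined formal power series, since only the factors with $i\le n$ affect the coefficient of $t^n$.

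Existence and uniqueness I would handle together by comparing coefficients. Expanding, the coefficient of $t^n$ in $\prod_{i\ge 1}(1-a_it^i)$ equals $\sum_{S}(-1)^{|S|}\prod_{i\in S}a_i$, summed over subsets $S\subseteq\{1,2,\dots\}$ of distinct positive integers with $\sum_{i\in S}i=n$. Isolating the one-element subset $S=\{n\}$, this coefficient has the form $-a_n+P_n(a_1,\dots,a_{n-1})$, where every monomial of $P_n$ is a product of at least two distinct earlier parameters; in particular $P_n$ has no constant term. Equating with $c_n$ gives the triangular recursion $a_n=P_n(a_1,\dots,a_{n-1})-c_n$, which determines the $a_i$ uniquely and shows they exist. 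This argument uses no division, so it is valid over an arbitrary commutative ring. For the second assertion, if $f\in 1+t^nA[[t]]^\times$ then $c_1=\dots=c_{n-1}=0$, and since each $P_k$ vanishes when its arguments vanish, an immediate induction on the recursion yields $a_1=\dots=a_{n-1}=0$.

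The substantive part is the final assertion, where $f\in 1+I[t]$ is a polynomial of degree $d$ with coefficients in $I$. The membership $a_i\in I$ is once more an induction on the recursion: in $a_n=P_n(a_1,\dots,a_{n-1})-c_n$ we have $c_n\in I$, and every monomial of $P_n$ is a product of at least two $a_j$'s, hence lies in $I^2\subseteq I$ once the earlier $a_j$ lie in $I$. The vanishing $a_i=0$ for $i\gg 0$ is where the nilpotency of $I$ must be used, and this is the main obstacle. My plan is to induct on the length $N$ with $I^N=0$. Reducing modulo $I^{N-1}$ replaces $(A,I)$ by $(A/I^{N-1},\,I/I^{N-1})$, whose ideal has length $N-1$; by the inductive hypothesis the reduced parameters vanish for $i$ beyond some bound $B$, that is, $a_i\in I^{N-1}$ for $i>B$.

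It then remains to exploit $I^{N-1}\cdot I=I^N=0$ and $(I^{N-1})^2=0$. These force the tail product $\prod_{i>B}(1-a_it^i)$ to collapse to $1-\sum_{i>B}a_it^i$, and they kill every cross term between this tail and the non-constant part of the head $\prod_{i\le B}(1-a_it^i)$, so that $f=\prod_{i\le B}(1-a_it^i)-\sum_{i>B}a_it^i$. Consequently $\sum_{i>B}a_it^i=\prod_{i\le B}(1-a_it^i)-f$ is a polynomial, and comparing degrees forces $a_i=0$ for all sufficiently large $i$. I expect the careful bookkeeping in this last step — tracking exactly which products vanish by nilpotency — to require the most attention, while the recursion underlying the rest is routine.
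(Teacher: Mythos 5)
Your proof is correct, but there is nothing in the paper to compare it against line by line: the paper states this lemma without proof and simply refers the reader to Anderson--Pablos Romo [AP]. Your argument is therefore a self-contained substitute, and its method also differs from the technique the paper uses on the statements it \emph{does} prove, namely the two-dimensional analogues (Lemmas 1.5 and 1.6). There the paper proceeds by factor-stripping: multiply $f$ by inverses of partial products so that the remainder begins at an ever higher power of $t_2$, with the nilpotent-coefficient case handled by an induction on the exponent $k$ with $I^k=0$ woven into that process; this is also how the cited source argues in one variable. Your triangular recursion $a_n=P_n(a_1,\dots,a_{n-1})-c_n$, obtained by comparing coefficients, settles existence and uniqueness simultaneously, uses no division so it works over any commutative ring, and makes the $1+t^nA[[t]]^{\times}$ assertion an immediate induction; your treatment of the finiteness assertion (reduce mod $I^{N-1}$, then use $I\cdot I^{N-1}=(I^{N-1})^2=0$ to collapse the tail and kill cross terms, giving $f=\prod_{i\le B}(1-a_it^i)-\sum_{i>B}a_it^i$, then compare degrees) is a clean alternative to stripping. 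The trade-off is that your coefficient recursion is the more elementary route in one variable, whereas the stripping method is the one that scales to $A((t_1))((t_2))$, which is why the paper uses it later. Three small touch-ups you should make explicit: (i) you are right to read the lemma's ``$a_i=0$ for $i\ll 0$'' as the intended ``$i\gg 0$''; (ii) since $f\in 1+I[t]$ may have constant term $1+c_0$ with $c_0\in I$, add the one-line remark that $f(0)^{-1}\in 1+I$, so $f(0)^{-1}f$ is again a polynomial whose non-constant coefficients lie in $I$, which is the form your recursion consumes; (iii) when you invoke the inductive hypothesis modulo $I^{N-1}$, note that reduction is $t$-adically continuous, hence commutes with the infinite product, so by your uniqueness statement the Witt parameters of the reduction are indeed the reductions $\bar a_i$.
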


See Anderson and Pablo Romo (\cite{A-P}.

\bigskip

\smallskip

There is a similar result for higher dimensional local fields: Let
$A$ again be a ring, $I \subset A$ be a nilpotent ideal. Consider
the two dimensional local field $A((t_1))((t_2))$. Let
$\Gamma(A((t_1)),I((t_1)))$ be the subset of $A((t_1))((t_2))$
that consist of all the elements of the form $$\displaystyle\sum_{i=-\infty}^{\infty}g_{i}(t_1){t_2}^i =
\sum_{i=-\infty}^{\infty}  (\sum_{j=-\infty}^{\infty} a_{ij}
{t_1}^{j}) {t_2}^{i}$$ such that there exists $
\omega_2=\omega_2(f)\in \Z,\  \omega_1=\omega_1(f) \in \Z \mbox{ satisfying: } g_{\omega_2}(t_1)\in A((t_1))^{\times},\ g_i(t_1) \in
I((t_1)), \forall \ i<\omega_2 \mbox{ and } a_{\omega_2\omega_1}\in
A^{\times},\ a_{\omega_2 j}\in I, \forall j<\omega_1 .$

\smallskip
\begin{proposition}
 (case of dimension 2) For all $f \in \Gamma (A((t_1)),I((t_1)))$,
there exists a unique family of parameters
$\{a_{i,j}\}_{i,j=-\infty}^\infty$ such that $$f = a_{0,0}
t_1^{\omega_1} t_2^{\omega_2}\displaystyle\prod_{i=-\infty}^\infty
\prod_{j=- \infty}^{\infty}(1-a_{i,j}t_1^j t_2^i)$$. 
\end{proposition}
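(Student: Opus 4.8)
The plan is to bootstrap the two-variable statement from the one-dimensional case by treating $A((t_1))$ as the base ring. First I would observe that if $I^N = 0$, then every coefficient of a product of $N$ elements of $I((t_1))$ is a finite sum of $N$-fold products drawn from $I$, hence lies in $I^N = 0$; thus $I((t_1))$ is a nilpotent ideal of the commutative unital ring $A((t_1))$. Consequently $\Gamma(A((t_1)),I((t_1)))$ is precisely the group $\Gamma(B,J)$ of the one-dimensional setup, for $B = A((t_1))$, $J = I((t_1))$, and uniformizer $t_2$. Applying the first Proposition above to $f$ in this ring yields a unique factorization
\[
f = g_0(t_1)\, t_2^{\omega_2} \prod_{i\ge 1}\bigl(1 - g_i(t_1)\,t_2^{i}\bigr)\prod_{i\ge 1}\bigl(1 - g_{-i}(t_1)\,t_2^{-i}\bigr),
\]
with $g_0(t_1) \in A((t_1))^\times$, with $g_i(t_1) \in A((t_1))$ for $i>0$, and with $g_{-i}(t_1) \in I((t_1))$ vanishing for $i \gg 0$.

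Second, I would peel off the pure $t_1$-part. The leading coefficient $g_0(t_1)$ lies in $\Gamma(A,I)$ with $\omega(g_0)=\omega_1$, so a second application of the one-dimensional Proposition, now in the variable $t_1$, gives $g_0(t_1)=a_{0,0}\,t_1^{\omega_1}\prod_{j\ne 0}(1-a_{0,j}t_1^{j})$. This produces the overall leading monomial $a_{0,0}t_1^{\omega_1}t_2^{\omega_2}$ together with the $i=0$ row $\prod_j(1-a_{0,j}t_1^{j}t_2^{0})$ of the desired double product.

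The remaining, genuinely two-dimensional step is to convert each $t_2$-factor $1 - g_i(t_1)t_2^{i}$, $i\ne 0$, into a product $\prod_j(1-a_{i,j}t_1^{j}t_2^{i})$. This cannot be done factor by factor, since expanding $\prod_j(1-a_{i,j}t_1^{j}t_2^{i})$ creates spurious higher powers $t_2^{2i},t_2^{3i},\dots$ that feed back into the other $t_2$-factors; so the $a_{i,j}$ must be determined globally rather than row by row. Here I would run a successive-approximation argument along the nilpotent filtration $A \supset I \supset I^2 \supset \cdots \supset I^N = 0$: working modulo $I^{k+1}$ and assuming the $a_{i,j}$ fixed modulo $I^{k}$, the remaining discrepancy lives in $I^{k}/I^{k+1}$, where every cross term (a product of two nontrivial factors) vanishes, so the correction decouples into independent one-dimensional problems solved by the two lemmas. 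Equivalently, one fixes a monoid ordering on the monomials $t_1^{j}t_2^{i}$ compatible with the support conditions and solves for each $a_{i,j}$ by matching the coefficient of $t_1^{j}t_2^{i}$ in $f$, the point being that every other contribution to that coefficient comes from strictly smaller monomials.

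Uniqueness then follows from the same recursion, since at each stage the parameter being introduced is forced by a coefficient already fixed, leaving no freedom. I expect the main obstacle to be exactly the bookkeeping of this last step: setting up the ordering (or the $I$-adic induction) so that $\prod_{i,j}(1-a_{i,j}t_1^{j}t_2^{i})$ converges in the appropriate Laurent-adic topology, and so that the support constraints on the $a_{i,j}$ (membership in $I$ for the negative ranges, vanishing for $i,j \ll 0$) are preserved under the recursion. The nilpotency of $I$ is what guarantees both convergence and termination.
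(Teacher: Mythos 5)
Your overall skeleton is the same as the paper's: the paper also works over the base ring $B=A((t_1))$ with nilpotent ideal $J=I((t_1))$, splits $f$ into $t_2$-positive and $t_2$-negative parts via its Lemma 1.2, factors the leading $t_1$-coefficient by the one-dimensional proposition, and then does the genuinely two-dimensional conversion of rows into double products in two auxiliary lemmas (its Lemmas 1.5 and 1.6). One small inaccuracy along the way: $\Gamma(A((t_1)),I((t_1)))$ is not ``precisely'' $\Gamma(B,J)$ --- the paper's definition additionally requires the leading $t_1$-coefficient to lie in $\Gamma(A,I)$, which in general is a proper subgroup of $A((t_1))^\times=B^\times$; only the inclusion you actually use holds, and that extra condition is exactly what entitles you to say $g_0\in\Gamma(A,I)$ in your second step (since $g_0$ differs from the leading coefficient of $f$ by a factor $1+\epsilon$ with $\epsilon\in I((t_1))$).

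The genuine gap is in your third step. Your $I$-adic successive approximation does not decouple as claimed, for two reasons. First, the base case $k=0$ is not a family of one-dimensional problems: modulo $I$ the negative rows vanish, but what remains is the factorization of an element of $1+t_2(A/I)((t_1))[[t_2]]$ into the double product $\prod_{i\ge 1}\prod_{j}(1-\bar a_{i,j}t_1^jt_2^i)$, which the one-dimensional lemmas do not provide; it requires the row-by-row peeling in which the spurious terms are pushed into higher $t_2$-rows and the process converges $t_2$-adically (this is the paper's Lemma 1.5). Second, in the inductive step the cross terms surviving mod $I^{k+1}$ are not only products of two corrections (those indeed die, lying in $I^{2k}$), but products of a correction $c\in I^k$ with already-fixed parameters of \emph{positive} rows, which lie in $A$, not in $I$: for instance perturbing $a_{-1,0}$ by $c$ changes the coefficient of $t_1^0t_2^0$ by $a_{1,0}c$, which is generally not in $I^{k+1}$. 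So the corrections satisfy a coupled linear system, not independent scalar equations. Nor is your ``equivalent'' monomial-ordering formulation actually equivalent, and it fails precisely for the negative rows: the coefficient of $t_1^jt_2^{-1}$ receives contributions such as $a_{1,j_1}a_{-2,j_2}$, i.e.\ from rows $-2,-3,\dots$, so no ordering of monomials makes all other contributions ``strictly smaller''; termination there comes only from nilpotency of $I$. This is exactly why the paper uses two different mechanisms --- $t_2$-adic row peeling for $i>0$ (Lemma 1.5) and row peeling combined with induction on the nilpotency exponent for $i<0$, where the junk created lies in $I^2$ (Lemma 1.6). Your plan becomes a proof once the claimed decoupling is replaced by those two arguments, deployed on the respective halves of the product.
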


To prove it, we'll need the following lemmas:

\begin{lemma}
 For all $f \in 1+t_2A((t_1))[[t_2]]$, there exists $a_{ij} \in A$ uniquely determined by $f$, such that $$f = \displaystyle\prod_{i=1}^{+\infty}
\prod_{j=-N_i}^{+\infty}(1-a_{ij}t_1^j t_2^i)$$. 
\end{lemma}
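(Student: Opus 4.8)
The plan is to determine the coefficients $a_{ij}$ recursively in the order of the power of $t_2$, in exact parallel with the one-dimensional lemma preceding this one. Writing $f = 1 + \sum_{i=1}^{\infty} c_i(t_1)\, t_2^i$ with each $c_i \in A((t_1))$, I would construct the family $\{a_{ij}\}$ so that, for every $i$, the coefficient of $t_2^i$ in the proposed product matches $c_i(t_1)$; both existence and uniqueness then fall out of the same induction.

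The observation driving the recursion is that a factor $1 - a_{ij}t_1^j t_2^i$ of level $i$ contributes to the coefficient of $t_2^i$ only through the single term $-a_{ij}t_1^j$: any product that pairs such a factor with any other factor lands in $t_2^{i+1}A((t_1))[[t_2]]$. Hence the coefficient of $t_2^i$ in $\prod_{i'\ge 1}\prod_j(1-a_{i'j}t_1^j t_2^{i'})$ equals $-\sum_j a_{ij}t_1^j + R_i(t_1)$, where $R_i$ is a finite sum of products of the parameters $a_{i'j}$ with $i'<i$, arising from collections of lower-level factors whose levels add up to $i$. Setting this equal to $c_i$ yields $\sum_j a_{ij}t_1^j = R_i(t_1) - c_i(t_1)$. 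By induction $R_i$ depends only on already-determined parameters, so the right-hand side is a fixed element of $A((t_1))$; reading off its Laurent coefficients then determines each $a_{ij}$ uniquely and, since a Laurent series has bounded order in $t_1$, forces $a_{ij}=0$ for $j<-N_i$ for some finite $N_i$. The base case $i=1$ has $R_1=0$ and simply gives $\sum_j a_{1j}t_1^j=-c_1$.

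The step I expect to be the main obstacle is justifying that the doubly infinite product is a well-defined element of $1+t_2 A((t_1))[[t_2]]$ and that the reorganization used above, grouping the contributions to $t_2^i$, is legitimate. Concretely, one must check that for each bidegree $(m,n)$ the coefficient of $t_1^n t_2^m$ is a finite sum. Since every factor carries a power $t_2^{i}$ with $i\ge 1$, at most $m$ factors can enter a term of $t_2$-degree $m$; and since within level $i$ the exponents satisfy $j\ge -N_i$, the constraint $\sum_l j_l=n$ with finitely many summands bounded below (hence also bounded above) leaves only finitely many admissible choices. This finiteness is exactly what makes both $R_i$ and the product itself converge in the $t_2$-adic topology, and it is also the reason the lower bound $-N_i$ on $j$ is indispensable. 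Granting this bookkeeping, the recursion produces the unique family $\{a_{ij}\}$ and completes the proof.
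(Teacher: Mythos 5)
Your proposal is correct and is essentially the paper's own argument: both proceed by induction on the $t_2$-degree, determining the level-$i$ parameters $a_{ij}$ from the coefficient of $t_2^i$ once all lower levels are fixed. The only presentational difference is that the paper peels off each level by dividing by the invertible partial product $A_i=\prod_j(1-a_{ij}t_1^j t_2^i)$, so that the remainder lies in $1+t_2^{i+1}A((t_1))[[t_2]]$, whereas you match coefficients directly against a remainder $R_i$ built from lower-level factors; your explicit finiteness bookkeeping for the coefficients of the doubly infinite product is a point the paper leaves implicit.
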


\begin{proof}

 Suppose $$f_1(t_1) = \sum_{j=-N_1}^{+\infty} a_{1,j}
t_1^j.$$ Let $$A_1 = \prod_{j=-N_1}^{+\infty} (1+a_{1,j}t_1^j
t_2).$$ Then $A_1$ is invertible in $A((t_1))((t_2))$. Define
$f^{(1)}:= A_1^{-1} f $, then $f = A_1 f^{(1)}$ and $$f^{(1)}= 1 +
\sum_{i=2}^{+\infty} f^{(1)}_i(t_1) t_2^i .$$ Suppose at
$n^{\text{th}} $ step we get $f = (\prod_{i=1}^{n}A_i)(f^{(n)}) $,
where: $$A_i = \displaystyle\prod_{j=-N_i}^{+\infty} (1+a_{i,j}t_1^j
t_2^i ),\ f^{(n)} = 1+\displaystyle\sum_{i=n+1}^{+\infty}
f_i^{(n)}(t_1) t_2^i.$$ If $$f_{n+1}^{(n)} =
\displaystyle\sum_{j=-N_{n+1}}^{+\infty} a^{(n)}_{n+1,j} t_1^j$$
then we can similarly set $$A_{n+1}=
\displaystyle\prod_{j=-N_{n+1}}^{+\infty}(1+a^{(n)}_{n+1,j}t_1^j
t_2^i).$$ Still this is invertible in $A((t_1))((t_2))$ and we can
set $$f^{(n+1)}:= A_{n+1}^{-1} f^{(n)} =
1+\displaystyle\sum_{i=n+2}^{+\infty} f^{(n+1)}_i (t_1) t_2^i .$$ So
$$f = \displaystyle\prod_{i=1}^{n+1} A_i f^{(n+1)}.$$ Continue this
process, we'll get that $$f = \displaystyle\prod_{i=1}^{+\infty}
\prod_{j=-N_i}^{+\infty}(1-a_{ij}t_1^j t_2^i).$$ Obviously $a_{ij}
\in A$ are uniquely determined by $f$ according to this process.
\end{proof}

\begin{lemma}
 For all $f \ \in 1-t_2^{-1}I((t_1))[t_2^{-1}]$, then there exist $N, N_i \ \in
\Z$, and $a_{ij} \in I$ uniquely determined by $f$, such that
$$f = \displaystyle\prod_{i=-N}^{-1}
\prod_{j=-N_i}^{+\infty}(1-a_{ij}t_1^j t_2^i).$$ 
\end{lemma}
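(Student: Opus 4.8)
The plan is to reduce the statement to the preceding lemma by the substitution $u = t_2^{-1}$. Since $f - 1 \in t_2^{-1}I((t_1))[t_2^{-1}]$, in the variable $u$ the element $f$ becomes a polynomial of some finite degree $N_0$ with no constant term, say $f = 1 - \sum_{i=1}^{N_0} c_i(t_1)\,u^i$ with $c_i \in I((t_1))$, and in particular $f \in 1 + u\,A((t_1))[[u]]$. This is a special case of the hypothesis of the preceding lemma (with $u$ in the role of $t_2$), which therefore already produces a family $\{a_{ij}\}_{i\ge 1}$ in $A$, uniquely determined by $f$, with $f = \prod_{i\ge 1}\prod_{j\ge -N_i}(1 - a_{ij}t_1^j u^i)$; re-indexing $i \mapsto -i$ and recalling $u^i = t_2^{-i}$ puts this back into the shape claimed. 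Existence of the factorization and uniqueness of its parameters are thus inherited for free, and what remains are exactly the two features special to the present hypothesis: that every $a_{ij}$ lies in $I$, and that $a_{ij}=0$ for all but finitely many $i$, so that the product runs only over $-N \le i \le -1$.

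For the first point I would reduce modulo $I$. Applying $A \to A/I$ coefficientwise sends $f$ to $\bar f = 1$, because $f-1$ has all coefficients in $I$; on the other hand it sends the factorization to $\bar f = \prod(1 - \bar a_{ij}t_1^j u^i)$, which is a factorization of the constant series $1$ of the shape governed by the preceding lemma over the ring $A/I$. By the uniqueness clause of that lemma, the parameters of $1$ are all zero, so $\bar a_{ij}=0$, i.e. $a_{ij}\in I$ for all $i,j$. (This uses only that reduction carries a valid factorization to a valid factorization, the conditions $\bar a_{ij}\in A/I$ and boundedness below in $j$ being inherited.)

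The finiteness is the only real work, and it is where the nilpotency of $I$, say $I^m=0$, is essential. I would prove the sharper estimate $a_{ij}\in I^{\lceil i/N_0\rceil}$ by strong induction on $i$. Expanding $f = \prod(1 - a_{ij}t_1^j u^i)$, the coefficient of $u^i$ equals $-\sum_j a_{ij}t_1^j$ plus a sum of terms $\pm\, a_{i_1 j_1}\cdots a_{i_k j_k}$ arising from choosing $k\ge 2$ distinct factors with $i_1 + \cdots + i_k = i$; here each $i_l < i$, so by the inductive hypothesis this product lies in $I^{\sum_l \lceil i_l/N_0\rceil}$, and $\sum_l \lceil i_l/N_0\rceil \ge \lceil i/N_0\rceil$. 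For $i \le N_0$ the estimate is just $a_{ij}\in I$, already known; for $i > N_0$ the coefficient of $u^i$ in $f$ vanishes, so the congruence forces $\sum_j a_{ij}t_1^j \in I^{\lceil i/N_0\rceil}((t_1))$, i.e. $a_{ij}\in I^{\lceil i/N_0\rceil}$. Feeding in $I^m=0$, every $i$ with $\lceil i/N_0\rceil \ge m$, that is $i > (m-1)N_0$, gives $a_{ij}=0$, so the product is finite with $N=(m-1)N_0$. The same nilpotency also guarantees that each inner product $\prod_j(1 - a_{ij}t_1^j u^i)$ is a well-defined element of $A((t_1))((u))$, since only products of fewer than $m$ factors survive and these are bounded below in $t_1$-degree.

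The main obstacle I anticipate is precisely this finiteness: a priori the preceding lemma only supplies an infinite product, and the content here is that the polynomiality of $f$ in $t_2^{-1}$ together with $I^m=0$ collapses it to a finite one. The filtration estimate $a_{ij}\in I^{\lceil i/N_0\rceil}$ is the mechanism forcing the high-degree parameters to vanish, and the two points needing care are the bookkeeping inequality $\sum_l \lceil i_l/N_0\rceil \ge \lceil i/N_0\rceil$ and the compatibility of reduction modulo $I$ with the factorization. Uniqueness, by contrast, is immediate, since our finite family is a special case of the unique infinite family provided by the preceding lemma.
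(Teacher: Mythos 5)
Your proposal is correct, but it takes a genuinely different route from the paper. The paper proves the lemma directly, by the same peeling process it used for the preceding lemma, adapted to negative powers of $t_2$: from the coefficient of $t_2^{-1}$ it forms $A_{-1}=\prod_{j}(1-a_{1,j}t_1^j t_2^{-1})$, inverts it (nilpotency of $I$ truncates each geometric series in the inverse to a finite sum), replaces $f$ by $f^{(-1)}=(A_{-1})^{-1}f$, and repeats; after clearing the $n$ negative powers originally present, the remainder lies in $1-t_2^{-(n+1)}I^2((t_1))[t_2^{-1}]$, and an outer induction on the nilpotency index $k$ (the statement is assumed for all ideals $J$ with $J^i=0$ for some $i<k$, and applied to $I^2$) finishes the factorization. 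You instead reuse the preceding lemma as a black box via the substitution $u=t_2^{-1}$, and then prove the two assertions special to the present lemma separately: $a_{ij}\in I$ by reducing the factorization modulo $I$ and invoking uniqueness over $A/I$, and finiteness via the estimate $a_{ij}\in I^{\lceil i/N_0\rceil}$ established by strong induction on $i$. Both arguments are sound. Your version is more modular and makes explicit two things the paper leaves implicit: uniqueness is genuinely inherited from the preceding lemma (the paper only asserts it ``according to this process''), and you obtain the quantitative bound $N\le (m-1)N_0$ in terms of the nilpotency index $m$ and the degree $N_0$ of $f$ in $t_2^{-1}$. The price is the need to check that the infinite product supplied by the preceding lemma is well defined and commutes with coefficientwise reduction in $A((t_1))[[u]]$, and that each inner product over $j$ substitutes back to a legitimate element of $A((t_1))((t_2))$ --- checks you flag and which do go through, precisely because of nilpotency. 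The paper's peeling construction avoids these convergence issues entirely, and its gain of a factor of $I$ after each full sweep (the remainder landing in $I^2((t_1))$) plays exactly the role of your ceiling estimate, organized as an induction on the nilpotency index rather than on the exponent of $t_2^{-1}$.
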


\begin{proof}
Induction on the integer $k$ such that $I^k = 0$. If $k = 1$, then
$f = 1$, so the statement is true. Now suppose the statement is true
for all the ideals $J$ satisfying $J^i=0$ for some $i<k$ and $I^k=0,
I^{k-1} \neq 0$. Suppose $$f = 1-\displaystyle\sum_{i=-n}^{-1}f_i
(t_1) t_2^i, $$ where $f_i (t_1) \ \in I((t_1))$. Write
$f_{-1}(t_1)$ as $$\displaystyle\sum_{j=-N_1}^{+\infty} a_{1,j}
t_1^j,$$ where $\ a_{1,j} \ \in I$. Then we can similarly set
$$A_{-1} = \displaystyle\prod_{j=-N_1}^{+\infty} (1-a_{1,j}t_1^j
t_2^{-1}).$$ Then $A_{-1}$ is invertible and its inverse is
$$\displaystyle\prod_{j=-N_1}^{+\infty}
(1+\displaystyle\sum_{l=1}^{+\infty} (a_{1,j}t_1^j t_2^{-1})^l).$$
Since $I$ is nilpotent, there exists an integer $k$, such that $I^k
= 0$. So every factor of the inverse has only finite terms, ie.
$$(A_{-1})^{-1} = \displaystyle\prod_{j=-N_1}^{+\infty}
(1+\displaystyle\sum_{l=1}^{k-1} (a_{1,j}t_1^j t_2^{-1})^l).$$ Let
$f^{-1} = (A_{-1})^{-1} f$. Then $f^{(-1)} \ \in 1 - t_2^{-2}
I((t_1))[t_2^{-1}]$ and the lowest power of $t_2$ in $f^{(-1)}$
would be $-n-k+1$. Write $$f^{(-1)} = 1-
\displaystyle\sum_{i=-2}^{-n-k+1} f^{(-1)}_{i}(t_1) t_2^i,$$ then
for $-n \leq i \leq -2, f^{(-1)}_{i} \  \in I((t_1))$, and for
$i<-n, f_{i}^{(-1)} \ \in I^{-i-n-1}((t_1))$. As before, if we write
$f^{(-1)}_{-2}$ as $$\sum_{j=-N2}^{+\infty} a^{(-1)}_{-2, j},$$ and
set $$A_{-2} = \prod_{j=-N2}^{+\infty} (1- a^{-1}_{-2,j} t_1^{j}
t_2^{-2} ), f^{(-2)} = (A_{-2}){-1} f = 1+ \sum_{i = -3}^{ -n-k+1}
f^{-2}_i (t_1)t_2^i,$$ then for $-n \leq i \leq -3,\  f^{-2}_i \in
I((t_1))$, for $-n-k+1 < i < -n, \ f^{-2}_{i} \in
I^{-i-n-1}((t_1))$. If we go on with this process, after $n$ steps, we
will get $$f = \prod_{i=-n}^{-1} A_{-i} f^{(-n)},$$ where $f^{(-n)}
\ \in 1 - t_2^{-(n+1)}I^2((t_1))[t_2^{-2}]$. So by induction
hypothesis, $f^{(-n)}$ can be written as
$$\displaystyle\prod_{i=-N'}^{-1}
\prod_{j=-N'_i}^{+\infty}(1-a'_{ij}t_1^j t_2^i).$$ So $f$ can be
written as $$\displaystyle\prod_{i=-N}^{-1}
\prod_{j=-N_i}^{+\infty}(1-a_{ij}t_1^j t_2^i),$$ and $a_{ij} \in I$
are uniquely determined by $f$.
\end{proof}

Combining these two lemmas and lemma 1.2, we can get proposition 1.4
easily.

Using similar techniques, we can proof the following proposition for
any higher dimensional cases:
\smallskip
\begin{proposition}(Case of dimension $n$):

For any function $f \in \Gamma(A((t_1))\cdots ((t_n)),I(t_1))\cdots
((t_n)))$(which is defined similarly), there exist coefficients
$\{a_{i_1,i_2,\cdots,i_n}\}$ uniquely determined by $f$, such that
$$f = \prod_{i_n=-N}^{+\infty}\prod_{i_{n-1}=-N_{i_n}}^{+\infty}\cdots
\prod_{i_1=-N_{i_2}}^{+\infty} (1 -
a_{i_1,i_2,\cdots,i_n}t_1^{i_1}t_2^{i_2}\cdots t_n^{i_n}).$$
\end{proposition}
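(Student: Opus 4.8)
The plan is to argue by induction on the dimension $n$, mirroring the passage from Proposition 1.1 to Proposition 1.4. The base case $n=1$ is exactly Proposition 1.1. For the inductive step I assume the statement in dimension $n-1$ and peel off the top variable $t_n$. First I would set $B = A((t_1))\cdots((t_{n-1}))$ and $J = I((t_1))\cdots((t_{n-1}))$, and check that $J$ is again nilpotent: if $I^k=0$ then $J^k=0$, since a product of $k$ Laurent series with coefficients in $I$ has every coefficient in $I^k$. Viewing $A((t_1))\cdots((t_n)) = B((t_n))$, the defining conditions of the $n$-dimensional $\Gamma$ say precisely that $f \in \Gamma(B,J)$ in the one-variable sense. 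Applying Proposition 1.1 over the ring $B$ with ideal $J$ and variable $t_n$ then gives a unique factorization
\[ f = \beta_0\, t_n^{\omega_n}\prod_{i=1}^\infty (1-\beta_i t_n^i)\prod_{i=1}^\infty(1-\beta_{-i}t_n^{-i}), \]
with $\beta_0\in B^{\times}$, $\beta_i\in B$ for $i\ge 1$, $\beta_{-i}\in J$ for $i\ge 1$, and only finitely many negative factors.

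Next I would expand each of the three pieces in the remaining variables. The leading unit $\beta_0 \in B^{\times}$, which one checks lies in the domain of the $(n-1)$-dimensional statement, is expanded by the induction hypothesis into the layer $i_n=0$, contributing the overall leading monomial $a_{0,\dots,0}\,t_1^{\omega_1}\cdots t_n^{\omega_n}$ together with all factors having $i_n=0$. The positive tail $\prod_{i\ge1}(1-\beta_i t_n^i)$ lies in $1+t_nB[[t_n]]$, and I would expand it by the $n$-variable analogue of Lemma 1.5: peel off one power of $t_n$ at a time, reading off at each layer the full multivariable Laurent expansion of the coefficient in $t_1,\dots,t_{n-1}$, exactly as the dimension-two proof reads off $\sum_j a_{1,j}t_1^j$. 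This yields all factors with $i_n\ge1$, and no unit hypothesis on the $\beta_i$ is needed. Finally the negative tail $\prod_{i\ge1}(1-\beta_{-i}t_n^{-i})$ lies in $1-t_n^{-1}J[t_n^{-1}]$ and I would expand it by the $n$-variable analogue of Lemma 1.6. Collecting all factors and reorganizing them into the nested product with lower bounds $-N,-N_{i_n},\dots$ gives the claimed form, and uniqueness propagates because each of the three expansions above is itself unique.

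The main obstacle is the negative-power tail, that is, the generalization of Lemma 1.6. Inverting a factor such as $(1-a\,t_1^j t_n^{-1})$ produces arbitrarily high negative powers of $t_n$, bounded only by the nilpotency of $I$; one must run the induction on the nilpotency index $k$ while tracking precisely which power $I^m$ each corrected coefficient falls into (the analogue of the $I^{-i-n-1}$ bookkeeping in the dimension-two proof), now with the single variable $t_1$ replaced by the tuple $(t_1,\dots,t_{n-1})$ and ``Laurent coefficient'' replaced by ``multivariable Laurent coefficient.'' The accompanying technical point is to verify that all infinite products converge in the iterated Laurent topology and that the nested lower bounds $-N_{i_2}\le i_1,\ \dots$ are preserved under the peeling, so that the final doubly-indexed product is well defined. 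I expect the verbatim generalizations of Lemmas 1.5 and 1.6 to be routine once this bookkeeping is set up, so that the genuine content of the induction is concentrated in the nilpotency argument for the negative layers.
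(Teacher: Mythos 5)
Your proposal is correct and is essentially the approach the paper intends: the paper offers no proof of this proposition at all beyond the single remark that it follows ``using similar techniques,'' i.e.\ the technique of the two-dimensional case, where Lemma 1.2 splits off the negative tail in the last variable and Lemmas 1.5 and 1.6 expand the positive and negative parts. Your induction on $n$ --- viewing $f$ as a one-variable element of $\Gamma(B,J)$ with $B=A((t_1))\cdots((t_{n-1}))$, $J=I((t_1))\cdots((t_{n-1}))$, expanding the $i_n=0$ layer by the inductive hypothesis, and generalizing Lemmas 1.5 and 1.6 with the nilpotency bookkeeping for the negative powers of $t_n$ --- is precisely that generalization, spelled out in more detail than the paper itself provides.
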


\subsection{Two foliations}
\label{subsec 2 foliations}

The goal of this section is to construct two foliations on a
complex projective algebraic surface $X$ in $\P^k$. Let $f_1$, $f_2$, $f_3$
and $f_4$ be four non-zero rational functions on the surface $X$.
Let
$$C\cup C_1\cup\dots\cup C_n=\bigcup_{i=1}^4|div(f_i)|.$$
Let $$\{P_1,\dots,P_N\}=C\cap (C_1\cup\dots\cup C_n).$$ We can
assume that the curves $C,C_1,\dots,C_n$ are smooth and that the
intersections are transversal (normal crossings), by allowing blow-ups on the surface $X$.

The two foliations have to satisfy the following
\\

\noindent
{\bf{Conditions:}}
\begin{enumerate}
\item There exists a foliation $F'_v$  such that
  \begin{enumerate}
    \item $F'_v=(f-v)_0$ are the level sets of a rational function $$f:X\rightarrow \P^1,$$ for small values of $v$, (that is, for $|v|<\e$ for a chosen $\e$);
    \item $F'_v$ is smooth for all but finitely many values of $v$;
    \item $F'_v$ has only nodal singularities;
    \item $ord_C(f)=1$;
    \item $R_i\notin C_j,$ for $i=1,\dots,M$ and $j=1,\dots,n$, where $$\{R_1,\dots,R_M\}=C\cap (D_1\cup\dots\cup D_m)$$ and $$F'_0=(f)_0=C\cup D_1\cup\dots\cup D_m.$$
  \end{enumerate}
\item There exists a foliation $G_w$ such that
  \begin{enumerate}
    \item $G_w=(g-w)_0$ are the level sets of a rational function $$g:X\rightarrow \P^1;$$
    \item $G_w$ is smooth for all but finitely many values of $w$;
    \item $G_w$ has only nodal singularities;
    \item $g|_C$ is non constant.
\end{enumerate}
\item Coherence between the two foliations $F'$ and $G$:
  \begin{enumerate}
    \item All but finitely many leaves of the foliation $G$ are transversal to the curve $C$.
    \item $G_{g(P_i)}$ intersects the curve $C$ transversally, for $i=1,\dots,N$. (For definition of the points $P_i$ see the beginning of this Subsection.)
    \item $G_{g(R_i)}$ intersects the curve $C$ transversally, for $i=1,\dots,M$. (For definition of the points $R_i$
see condition 1(e).)
\end{enumerate}
\end{enumerate}

For the existence of such foliations see \cite{rec2}.

\begin{lemma}
With the above notation, for small values of $|v|$, we have that $F_v$  has the homotopy type of $C_0$.
\end{lemma}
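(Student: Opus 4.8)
The plan is to view the smooth leaf $F_v=(f-v)_0$ for small $v\neq 0$ as a smoothing of the reducible central leaf $F_0=(f)_0=C\cup D_1\cup\dots\cup D_m$, and to exhibit a deformation retraction of $F_v$ onto the distinguished component $C_0:=C$ of $F_0$. First I would restrict $f$ to a small disc $\Delta=\{|v|<\e\}\subset\P^1$, chosen using condition 1(b) so that $0$ is the only critical value of $f$ inside $\Delta$, and using condition 1(c) so that the only degeneration occurring as $v\to 0$ is the appearance of nodes along $F_0$. Over the punctured disc $\Delta\setminus\{0\}$ the restriction of $f$ is a proper submersion (properness because $X$ is projective), so by Ehresmann's fibration theorem it is a locally trivial fibre bundle; in particular all the leaves $F_v$ with $0<|v|<\e$ are diffeomorphic, and it suffices to analyse a single small $v$.

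For that analysis I would separate the smooth locus of the central leaf from its nodes $R_i\in C\cap(D_1\cup\dots\cup D_m)$. Away from the nodes, condition 1(d), $\mathrm{ord}_C(f)=1$, guarantees that $df\neq 0$ along the smooth part of $C$ (and likewise along each $D_i$), since in local coordinates with $C=\{x=0\}$ one has $f=x\cdot u$ with $u$ a unit, so $df|_C=u\,dx\neq 0$; by the implicit function theorem $F_v$ is therefore a normal graph over $C$ away from small discs around the $R_i$, and similarly over each $D_i$. Near each node the nodal condition 1(c) provides a local model $f=xy$, for which the level set $F_v=\{xy=v\}$ is a cylinder joining the branch on $C$ to the branch on $D_i$. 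Patching these pictures shows that $F_v$ is the topological plumbing of the curves $C,D_1,\dots,D_m$: one deletes a small disc around each $R_i$ from every component meeting it and glues in a connecting cylinder at each node in place of the vanishing cycle.

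It then remains to retract this plumbed surface onto $C_0=C$. Here I would use the structural input, arranged by the blow-ups allowed in the construction, that the extra components $D_i$ are rational tails meeting $C$ transversally in single points: deleting a disc from a $D_i\cong\P^1$ leaves a disc, and the cylinder glued back at $R_i$ together with that disc simply fills the puncture cut out of $C$. Carrying out these elementary collapses simultaneously --- and, for a tree of such tails, from the leaves of the tree inward --- produces a deformation retraction of $F_v$ onto $C$, which is the assertion. Equivalently, a genus count ($p_a(F_0)=g(C)$ when the $D_i$ are rational and the dual graph of $F_0$ is a tree) shows that $F_v$ is in fact diffeomorphic to $C$.

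The step I expect to be the main obstacle is the local-to-global gluing in the plumbing description: one must construct the retractions on the individual pieces --- the graph region over $C$, the cylinders at the nodes, and the rational tails $D_i$ --- so that they agree on the overlapping collars and assemble into a single continuous deformation retraction. This in turn rests on pinning down the two local normal forms, namely the submersion form coming from 1(d) on the smooth locus and the node form $f=xy$ coming from 1(c), and on checking their compatibility in the transition collars around the points $R_i$.
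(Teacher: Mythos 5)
The paper itself contains no proof of this lemma: it says only ``A proof can be found in \cite{rec2}.'' So the only meaningful comparison is with the argument in \cite{rec2}, which is a local argument in a tubular neighbourhood of $C$ --- and measured against that, your proposal rests on a misreading of the statement which turns it into a false claim. The Conditions define $F'_v$ (with a prime) as the full level set $(f-v)_0$; the lemma concerns $F_v$ (no prime), which in \cite{rec2} denotes only the part of that level set lying in a tubular neighbourhood of $C$, with the pieces near the intersection points $R_i$ (and near the curves $C_j$) cut away by leaves of the second foliation $G$; correspondingly $C_0$ is not the compact curve $C$ but $C$ with finitely many points removed (this paper itself later speaks of ``a curve $C_0$ of genus $g$ with $n$ punctures'' and of $C^0=C_0-(D_1\cup\dots\cup D_m)\cap C_0$). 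You instead take $F_v$ to be the entire smooth fibre and set $C_0:=C$. Under that reading the statement fails in general: the smooth fibre is a closed surface of genus $p_a(F_0)=g(C)+\sum_i g(D_i)+b_1(\Gamma)$, where $\Gamma$ is the dual graph of $F_0$, so it admits no retraction onto $C$ as soon as some $D_i$ has positive genus or the configuration contains a cycle; and a closed surface is never homotopy equivalent to a punctured one.

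You sense this, which is why you insert the hypothesis that the $D_i$ are rational tails meeting $C$ in single points with tree dual graph, ``arranged by the blow-ups allowed in the construction.'' That is the genuine gap: no such hypothesis appears among Conditions 1--3, and it cannot be arranged --- the blow-ups in the paper serve only to make the curves smooth and the crossings normal, and blowing up can only add components to $F_0$; it never removes positive-genus components or cycles from the fibre of a pencil. (A smaller instance of the same overreach: condition 1(d) gives $\mathrm{ord}_C(f)=1$ along $C$ only, so your ``and likewise along each $D_i$'' is unjustified --- $f$ may vanish to higher order along $D_i$.) What is worth keeping is exactly your second paragraph: since $\mathrm{ord}_C(f)=1$, the implicit function theorem --- equivalently, projection along the $G$-leaves, which are transversal to $C$ by Condition 3 --- exhibits the part of $(f-v)_0$ lying over $C$ minus small discs about the $R_i$ as a graph over that surface, and the remnants of the local cylinders $xy=v$ inside the tubular neighbourhood are annuli which retract onto their inner boundary circles. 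With $F_v$ correctly defined, this alone gives a deformation retraction of $F_v$ onto a surface diffeomorphic to $C$ minus finitely many discs, i.e.\ the homotopy type of the punctured curve $C_0$, with no assumption whatsoever on the genera or configuration of the $D_i$. That is the content of the lemma and, in essence, the proof in \cite{rec2}; the global plumbing picture and the collapsing of tails are not needed and cannot be justified.
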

A proof can be found  in \cite{rec2}.

\subsection{Iterated integrals}

For proofs of theorems of this section, see Chen[Ch] or Goncharov[G].

\begin{definition}
\smallskip Let $\om_1, \om_2, \cdots, \om_n$ be holomorphic 1-forms
on a simply connected open subset $U$ of the complex plane $\C$. Let
$\ga : [0,1]\to U$ be a path. Then we call the integral
$$
\int_{\ga} \om_1 \circ  \cdots \circ \om_n := \int \cdots
\int_{0\leq t_1\leq\cdots\leq t_n\leq 1} \ga^* \om_1(t_1)\wedge
\cdots \wedge \ga^*\om_n(t_n)
$$
the iterated integral of the differential forms $\om_1, \om_2,
\cdots, \om_n$ over the path $\ga$.
\end{definition}

\begin{theorem}\label{hominv}
Let $\om_1, \cdots, \om_n$ be holomorphic 1-forms on a simply
connected open subset $U$ of the complex plane $\C$. Let $H : [0, 1]
\times [0, 1] \to U$ be a homotopy, fixing the end points, of paths
$\ga_s : [0, 1] \to U $such that $\ga_s(t) = H(s, t)$. Then
$$\int_{\ga_s} \om_1 \circ \cdots \circ \om_n $$ is independent of $s$.
\end{theorem}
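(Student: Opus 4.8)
The plan is to fix the forms, regard the iterated integral as a function $F(s)=\int_{\ga_s}\om_1\circ\cdots\circ\om_n$ of the homotopy parameter, and show directly that $F'(s)=0$. Writing $\om_i=f_i(z)\,dz$ with $f_i$ holomorphic, I set $\phi_i(s,t)=f_i(H(s,t))\,\partial_t H(s,t)$ and $\psi_i(s,t)=f_i(H(s,t))\,\partial_s H(s,t)$, so that $H^*\om_i=\phi_i\,dt+\psi_i\,ds$ and
$$F(s)=\int_{0\le t_1\le\cdots\le t_n\le 1}\phi_1(s,t_1)\cdots\phi_n(s,t_n)\,dt_1\cdots dt_n.$$
Since all data are smooth on the compact square $[0,1]^2$, I may differentiate under the integral sign to get $F'(s)=\sum_{i=1}^n T_i$, where $T_i$ is the same simplex integral with the $i$-th factor $\phi_i$ replaced by $\partial_s\phi_i$.

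The first input is that each $\om_i$ is closed: for a holomorphic form on a domain in $\C$ one has $d\om_i=(\partial f_i/\partial\bar z)\,d\bar z\wedge dz=0$. Pulling back gives $d(H^*\om_i)=0$, which is the identity $\partial_s\phi_i=\partial_t\psi_i$. Substituting this into $T_i$ and performing the inner $t_i$-integration by the fundamental theorem of calculus (the limits being $t_{i-1}\le t_i\le t_{i+1}$, with the conventions $t_0=0$, $t_{n+1}=1$) replaces $T_i$ by a difference $U_i-L_i$ of two integrals over the $(n-1)$-simplex: in $U_i$ the slots $i$ and $i+1$ are merged at a common parameter with factor $\psi_i\phi_{i+1}$, while in $L_i$ the slots $i-1$ and $i$ are merged with factor $\phi_{i-1}\psi_i$.

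The heart of the argument is a telescoping cancellation. Comparing the upper boundary $U_i$ with the lower boundary $L_{i+1}$ of the next term, both are integrals over the same region whose integrands differ only in the merged factor, namely $\psi_i\phi_{i+1}$ versus $\phi_i\psi_{i+1}$. But
$$\psi_i\phi_{i+1}-\phi_i\psi_{i+1}=f_if_{i+1}\big((\partial_sH)(\partial_tH)-(\partial_tH)(\partial_sH)\big)=0,$$
which is exactly the vanishing of $\om_i\wedge\om_{i+1}$ forced by $dz\wedge dz=0$ in complex dimension one. Hence $U_i=L_{i+1}$ for $1\le i\le n-1$, and the sum $\sum_i(U_i-L_i)$ collapses to $U_n-L_1$.

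It remains to dispose of the two extreme terms. The term $U_n$ evaluates $\psi_n$ at $t=1$ and $L_1$ evaluates $\psi_1$ at $t=0$; since the homotopy fixes the endpoints, $H(s,0)$ and $H(s,1)$ are constant in $s$, so $\partial_sH(s,0)=\partial_sH(s,1)=0$ and therefore $\psi_1(s,0)=\psi_n(s,1)=0$. Thus $U_n=L_1=0$, giving $F'(s)=0$ and proving that $F(s)$ is independent of $s$. I expect the telescoping step to be the main obstacle: it is where one must keep the simplex bookkeeping straight and recognize that the leftover adjacent-slot discrepancies are precisely the wedge products $\om_i\wedge\om_{i+1}$, which vanish here for two independent reasons, that the forms are holomorphic (hence closed) and that the ambient complex dimension is one. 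Conceptually this is Chen's formula $dI=\sum\int\cdots d\om_i\cdots-\sum\int\cdots(\om_i\wedge\om_{i+1})\cdots$ on path space, with both sums identically zero.
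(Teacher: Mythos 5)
Your proof is correct, and it is essentially the proof the paper relies on: the paper itself gives no argument for this theorem, deferring to Chen \cite{Ch} and Goncharov \cite{G}, and your differentiation-under-the-integral-sign computation --- closedness giving $\partial_s\phi_i=\partial_t\psi_i$, the telescoping cancellation of adjacent boundary terms via $\omega_i\wedge\omega_{i+1}=0$, and the endpoint-fixing condition killing $U_n$ and $L_1$ --- is exactly Chen's variational proof of homotopy invariance specialized to holomorphic forms on a plane domain. The only point to flag is that differentiating under the integral sign tacitly assumes the homotopy $H$ is (piecewise) smooth rather than merely continuous; this is the standard convention in Chen's theory (one smooths a continuous homotopy by approximation), but it deserves a sentence.
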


\begin{theorem}\label{shurel}[Shuffle relation]
Let $\om_1, \cdots, \om_n, \om_{n+1}, \cdots, \om_{n+n}$ be
differential 1-forms, where some of them could repeat. Let also $\ga$ be a
path that does not pass through any of the poles of the given
differential forms. Denote by $Sh(m, n)$ the shuffles, which are
permutations $\tau$ of the set $\{1, . . . , m,m + 1, . . . ,m +
n\}$ such that $\tau(1) < \tau(2) < \cdots < \tau(m)$ and $\tau(m +
1) < \tau(m + 2) < \cdots < \tau(m + n)$. Then
$$ \int_{\ga} \om_1 \circ \cdots \circ \om_n \int_{\ga} \om_{n+1}
\circ \cdots \circ \om_{m+n} = \sum_{\tau \in Sh(m,n)} \int_{\ga}
\om_{\tau(1)} \circ \om_{\tau(2)} \cdots \circ \om_{\tau(m+n)}.$$
\end{theorem}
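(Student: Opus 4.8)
The plan is to prove the identity by realizing both sides as integrals of the same differential form over subsets of a Euclidean cube, and then decomposing the relevant region into pieces indexed by shuffles. Writing $m$ and $n$ for the lengths of the two words (so the first factor is $\int_{\ga}\om_1\circ\cdots\circ\om_m$ and the second is $\int_{\ga}\om_{m+1}\circ\cdots\circ\om_{m+n}$), I would first appeal directly to the Definition of the iterated integral to express each factor as an integral over a standard ordered simplex. Thus the first factor is an integral over $\Delta_m=\{0\le t_1\le\cdots\le t_m\le 1\}$ and the second over $\Delta_n=\{0\le s_1\le\cdots\le s_n\le 1\}$, of the pullbacks $\ga^*\om_1(t_1)\wedge\cdots\wedge\ga^*\om_m(t_m)$ and $\ga^*\om_{m+1}(s_1)\wedge\cdots\wedge\ga^*\om_{m+n}(s_n)$ respectively.

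Next I would invoke Fubini to write the product of the two integrals as a single integral over the product region $\Delta_m\times\Delta_n\subset[0,1]^{m+n}$ of the exterior product of the two integrands. The heart of the argument is then a purely combinatorial decomposition of this product region. For a point of $\Delta_m\times\Delta_n$ whose $m+n$ coordinates are pairwise distinct, the relative order of the combined coordinates determines, and is determined by, a unique shuffle $\tau\in Sh(m,n)$: the $t$-coordinates must appear in increasing order and the $s$-coordinates must appear in increasing order, which is exactly the defining condition of a shuffle. I would therefore let $\Delta_\tau$ denote the closed ordered simplex in which the combined coordinates are arranged according to $\tau$, and argue that $\Delta_m\times\Delta_n=\bigcup_{\tau\in Sh(m,n)}\Delta_\tau$, with the $\Delta_\tau$ meeting only along the measure-zero loci where two coordinates coincide.

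On each piece $\Delta_\tau$ I would relabel the $m+n$ coordinates, in increasing order of value, as $u_1\le\cdots\le u_{m+n}$; this identifies $\Delta_\tau$ with the standard simplex $\{0\le u_1\le\cdots\le u_{m+n}\le 1\}$, and under this relabeling the restricted integrand becomes $\ga^*\om_{\tau(1)}(u_1)\wedge\cdots\wedge\ga^*\om_{\tau(m+n)}(u_{m+n})$. By the Definition again, the integral over $\Delta_\tau$ is precisely the iterated integral $\int_{\ga}\om_{\tau(1)}\circ\cdots\circ\om_{\tau(m+n)}$. Summing over all shuffles, and using that the overlaps contribute nothing to the integral, yields the stated formula. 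Note that no assumption on the distinctness of the forms $\om_i$ is needed, since the decomposition is governed by the coordinate values and not by the forms themselves.

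The step I expect to be the main obstacle is the geometric decomposition $\Delta_m\times\Delta_n=\bigcup_\tau\Delta_\tau$: one must check both that every generic point lies in some $\Delta_\tau$ and that distinct shuffles give simplices with disjoint interiors, so that there is no double counting. Two further routine but necessary points are the justification that the coincidence loci carry no measure, so that they may be ignored in the integration, and the orientation bookkeeping, which here is benign because the factors are $1$-forms and each reordering of the integration variables is accompanied by the matching reordering of the wedge factors.
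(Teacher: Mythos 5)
Your proposal is correct, and in fact the paper offers no proof of this theorem at all: it defers to Chen [Ch] and Goncharov [G], and your argument --- writing each factor as an integral over an ordered simplex, using Fubini to pass to $\Delta_m\times\Delta_n$, and decomposing that product region into the simplices $\Delta_\tau$ indexed by shuffles, which overlap only on the measure-zero coincidence loci --- is precisely the standard proof found in those references. The three points you flag as needing care (the covering/disjointness of the $\Delta_\tau$, the negligibility of the coincidence loci, and the orientation bookkeeping for reordered $1$-form factors) are exactly the right ones, and your treatment of each is sound; you also correctly repaired the statement's typo by taking the first word to have length $m$ rather than $n$.
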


\begin{lemma}[Reversing the path]
$$\int_{\ga} \om_1 \circ \om_{2} \circ \cdots \circ \om_n = (-1)^n \int_{\ga^{-1}} \om_n \circ \om_{n-1} \circ \cdots \circ \om_1$$
\end{lemma}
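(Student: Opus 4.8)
The plan is to prove the identity directly, by reducing both sides to integrals over the standard simplex in the parameter variables and then performing a single change of variables. First I would pull everything back to $[0,1]$: writing $\ga^*\om_i = h_i(t)\,\di t$ for each $i$, Definition~1.13 expresses the left-hand side as
$$\int_{\ga}\om_1\circ\cdots\circ\om_n = \int_{0\le t_1\le\cdots\le t_n\le 1} h_1(t_1)\,h_2(t_2)\cdots h_n(t_n)\,\di t_1\cdots \di t_n.$$

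Next I would compute the pullback of each form along the reversed path $\ga^{-1}(t)=\ga(1-t)$. By the chain rule $(\ga^{-1})^*\om_i = -\,h_i(1-t)\,\di t$, so each of the $n$ forms contributes a factor $-1$; this is precisely the source of the global sign $(-1)^n$. Substituting into the definition of the right-hand side, and keeping the forms in the reversed order $\om_n,\om_{n-1},\dots,\om_1$, gives
$$\int_{\ga^{-1}}\om_n\circ\cdots\circ\om_1 = (-1)^n\!\!\int_{0\le s_1\le\cdots\le s_n\le 1}\!\! h_n(1-s_1)\,h_{n-1}(1-s_2)\cdots h_1(1-s_n)\,\di s_1\cdots \di s_n.$$

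The heart of the argument is the change of variables $v_k = 1-s_{n+1-k}$. This is an affine map with Jacobian of absolute value $1$, and it simultaneously does two things: it turns each argument $1-s_{n+1-j}$ into the single variable $v_j$, so that the factor carrying the index $j$ becomes exactly $h_j(v_j)$, and it reverses the chain $s_1\le\cdots\le s_n$ back into $v_1\le\cdots\le v_n$. After the substitution the right-hand integral collapses onto the simplex integral for the left-hand side, the surviving sign is $(-1)^n$, and rearranging (using $(-1)^{2n}=1$) yields the claim.

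The step I expect to require the most care is this double reversal: the list of forms has been reversed, and the simplex ordering is reversed by $s\mapsto 1-s$, and one must verify that these two reversals compose to the identity on indices so that $h_j$ lands against $v_j$ rather than against a shifted variable. An alternative that avoids this bookkeeping is induction on $n$, differentiating $F(t)=\int_{\ga|_{[0,t]}}\om_1\circ\cdots\circ\om_n$ to obtain the recursion $F'(t)=h_n(t)\int_{\ga|_{[0,t]}}\om_1\circ\cdots\circ\om_{n-1}$, setting up the parallel recursion for the reversed path, and feeding in the inductive hypothesis; but the single change of variables above is the most transparent route.
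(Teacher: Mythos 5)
Your proof is correct, but there is nothing in the paper to compare it against: the paper never proves this lemma, since Subsection 1.3 opens with the blanket remark ``For proofs of theorems of this section, see Chen [Ch] or Goncharov [G]'' and then states the lemma bare. Your argument is the standard, self-contained one (essentially Chen's), and the bookkeeping is right at the one place where it could go wrong: pulling back along $\gamma^{-1}(t)=\gamma(1-t)$ contributes one factor of $-1$ per form, giving $(-1)^n$, and the substitution $v_k=1-s_{n+1-k}$ simultaneously restores the simplex ordering $v_1\le\cdots\le v_n$ and pairs $h_j$ with $v_j$, precisely because the reversal of the list of forms and the reversal $s\mapsto 1-s$ of the parameter compose to the identity on indices --- the point you correctly single out as the crux. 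Treating the iterated integral as an ordinary multiple integral of $h_1(t_1)\cdots h_n(t_n)$ over the simplex, so that only the absolute value of the (unit) Jacobian enters, is the right reading of the paper's Definition 1.13; no orientation subtlety survives. Your fallback suggestion (induction via the recursion $F'(t)=h_n(t)\int_{\gamma|_{[0,t]}}\omega_1\circ\cdots\circ\omega_{n-1}$) would also work and is closer in spirit to how the composition-of-paths formula (Theorem 1.16) is usually proved, but it is not needed; the single change of variables is complete as it stands.
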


\begin{theorem}\label{lemma composition of paths}
[Composition of paths]
Let $\om_1, \om_2, \cdots, \om_n$ be differential forms, where some of
them could repeat. Let $\ga_1$ be a path that ends at $Q$ and
$\ga_2$ be a path that starts at $Q$. Then
$$\int_{\ga_1\ga_2} \om_1 \circ \om_{2} \circ \cdots \circ \om_n = \sum_{i=0}^{n}\int_{\ga_1} \om_1 \circ \om_{2} \circ \cdots \circ \om_i \int_{\ga_2} \om_i \circ \om_{i+1} \circ \cdots \circ \om_n$$
\end{theorem}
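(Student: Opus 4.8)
The plan is to prove Chen's decomposition formula by slicing the domain of integration. Recall that $\int_{\ga_1\ga_2}\om_1\circ\cdots\circ\om_n$ is by definition an integral of the pulled-back forms over the standard simplex $\Delta_n=\{0\le t_1\le\cdots\le t_n\le 1\}$. First I would fix a parametrization of the concatenated path $\ga_1\ga_2$ on $[0,1]$ in which $\ga_1$ sweeps out the subinterval $[0,c]$ and $\ga_2$ sweeps out $[c,1]$ for some interior cut point $c$; by reparametrization invariance of iterated integrals (which follows from the homotopy invariance of Theorem~\ref{hominv}) the precise value of $c$ is irrelevant to each of the iterated integrals that will appear.

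The key combinatorial step is to partition $\Delta_n$, up to a set of measure zero, according to how many of the coordinates $t_1,\ldots,t_n$ fall in the first interval $[0,c]$. For each $i\in\{0,1,\ldots,n\}$ this produces the region $R_i=\{0\le t_1\le\cdots\le t_i\le c\le t_{i+1}\le\cdots\le t_n\le 1\}$, and these regions are pairwise disjoint and cover $\Delta_n$. On $R_i$ the pulled-back form $(\ga_1\ga_2)^*\om_j$ equals $\ga_1^*\om_j$ for $j\le i$ (since then $t_j\le c$) and equals $\ga_2^*\om_j$ for $j>i$. Hence the integrand splits as a product of a factor depending only on $t_1,\ldots,t_i$ and a factor depending only on $t_{i+1},\ldots,t_n$, and the domain $R_i$ is itself a product of two smaller simplices.

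Next I would apply the affine change of variables $t_j=c\,s_j$ on the first block and $t_j=c+(1-c)u_j$ on the second block, rescaling $[0,c]$ and $[c,1]$ to the unit interval. This identifies the integral over $R_i$ with $\bigl(\int_{\ga_1}\om_1\circ\cdots\circ\om_i\bigr)\bigl(\int_{\ga_2}\om_{i+1}\circ\cdots\circ\om_n\bigr)$, the two remaining simplices being precisely the domains defining the iterated integrals over $\ga_1$ and $\ga_2$. Summing the contributions of the $R_i$ over $i=0,\ldots,n$, with the empty iterated integral over a path read as $1$, yields the claimed identity.

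The main obstacle I expect is bookkeeping rather than anything deep: one must verify that the boundary hyperplanes $t_i=c$ separating the regions $R_i$ have measure zero and contribute nothing, and that the wedge of pulled-back forms factors cleanly once the variables are separated into the two blocks. A secondary point worth stating explicitly is that moving the cut point to an arbitrary interior $c$ leaves each individual iterated integral $\int_{\ga_1}\om_1\circ\cdots\circ\om_i$ and $\int_{\ga_2}\om_{i+1}\circ\cdots\circ\om_n$ unchanged; this is reparametrization invariance, and making it precise is what justifies the reduction of each $R_i$ to a product of two standard simplices.
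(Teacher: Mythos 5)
Your proof is correct and is essentially the canonical one: the paper itself supplies no proof of this theorem, deferring to Chen \cite{Ch} and Goncharov \cite{G}, and your slicing of the simplex $\Delta_n$ into the product regions $R_i$ (up to the measure-zero hyperplanes $t_j=c$), followed by affine rescaling of the two blocks, is precisely the classical argument found in those references. Two minor remarks: the statement as printed contains an index typo --- the second factor should read $\int_{\ga_2}\om_{i+1}\circ\cdots\circ\om_n$, starting at $\om_{i+1}$ rather than $\om_i$, and this corrected version is what you prove --- and the reparametrization invariance you invoke need not be routed through the homotopy invariance of Theorem \ref{hominv} (which assumes holomorphic forms on a simply connected set); it follows more elementarily from the change-of-variables formula, since an increasing reparametrization carries the standard simplex onto itself and the pulled-back forms transform accordingly.
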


Let $\tau$ be a simple loop around $C$ in $X-D$, based at $R$.
Let $\sigma$ be a loop on the curve $C^0=C_0-(D_1\cup\dots\cup D_m)\cap C_0$.
We define a {\it{membrane}} $m_{\sigma}$  associated to a loop $\sigma$ in $C^0$ by
$$m_{\sigma}:[0,1]^2\rightarrow X,$$
$$m_{\sigma}(s,t)\in F_{f(\tau(t))}\cap G_{g(\sigma(s))}$$
$$m_{\sigma}(0,0)=R.$$
Note that for fixed values of $s$ and $t$, we have that
$$F_{f(\tau(t))}\cap G_{g(\sigma(s))}$$
consists of finitely many points, where $F$ and $G$ are foliations
satisfying the Conditions in Subsection 2.1 and Lemma 2.2.

Consider the dependence of $\log(f_i(m(s,t))$ on the variables $s$
and $t$ via the parametrization of the membrane $m$. We have
$$d\log(f_i(m(s,t))=
\frac{\partial \log(f_i(m(s,t)))}{\partial s}ds+\frac{\partial \log(f_i(m(s,t)))}{\partial t}dt.
$$

In order to use a more compact notation, we will use
$$\log(f_i),_s(s,t)=\frac{\partial \log(f_i(m(s,t)))}{\partial s}$$
and similarly
$$\log(f_i),_t(s,t)=\frac{\partial \log(f_i(m(s,t)))}{\partial t}.$$

\begin{definition}(Interior Iterated integrals on membranes)
\label{def I} Let $f_1,\dots, f_{k+l}$ be rational functions on
$X$, where the integers $(k,l)$ will be superscripts. Let $m$ be
a membrane as above. We define:\\
\\
\noindent (a) $I^{(1,1)}(m;f_1,f_2)=$\\
\begin{eqnarray*}
\int_0^1\int_0^1\log(f_1),_s(s,t)ds\wedge \log(f_1),_t(s,t)dt;
\end{eqnarray*}

\noindent (b) $I^{(1,2)}(m;f_1,f_2,f_3)=$\\
\begin{eqnarray*}
=\int\int\int_{0\leq s_1\leq s_2\leq 1;0\leq t\leq1}
\log(f_1),_{s_1}(s_1,t)ds_1
\wedge
\log(f_2),_{t}(s_1,t)dt\wedge \\
\wedge
\log(f_3),_{s_2}(s_2,t)ds_2;
\end{eqnarray*}

\noindent (c) $I^{(2,1)}(m;f_1,f_2,f_3)=$
\begin{eqnarray*}
=\int\int\int_{0\leq s\leq 1;0\leq t_1\leq t_2\leq1}
\log(f_1),_{s}(s,t_1)ds
\wedge
\log(f_2),_{t_1}(s,t_1)dt_1\wedge\\
\wedge
\log(f_3),_{t_2}(s,t_2)dt_2;
\end{eqnarray*}
\end{definition}

Define any smooth metric on $X$. Let $\tau$ be a simple loop
around the curve $C$ of distance at most $\e$ from $C$. We are
going to take the limit as $\e\rightarrow 0$. Informally, the
radius of the loop $\tau$ goes to zero. Then we have the following
lemma.





Using Chen \cite{Ch} we obtain the following Lemma.

\begin{lemma}
\label{le commutator}
Let $\alpha$ and $\beta$ be two loops on the surface $X$ with a common base. 
Put
$$\theta_1=\frac{df_1}{f_1}$$
and 
$$\theta_2=\frac{df_2}{f_2}\wedge\frac{df_3}{f_3}.$$
Put $[\alpha,\beta]=\alpha\beta\alpha^{-1}\beta^{-1}.$
Then
$$\int_{[\alpha,\beta]}\theta_1\cdot\theta_2=
\int_{\alpha}\theta_1\int_\beta\theta_2
-
\int_{\beta}\theta_1\int_\alpha\theta_2.
$$
\end{lemma}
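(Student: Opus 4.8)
The plan is to treat $\int_{[\alpha,\beta]}\theta_1\cdot\theta_2$ as a length-two iterated integral, with $\theta_1$ integrated first and $\theta_2$ second, and to reduce it to integrals over the four constituent loops $\alpha,\beta,\alpha^{-1},\beta^{-1}$ by repeated use of the composition-of-paths formula (Theorem~\ref{lemma composition of paths}). Applying that theorem inductively to the concatenation $\alpha\beta\alpha^{-1}\beta^{-1}$ expresses the left-hand side as a sum of two kinds of terms: the ``diagonal'' length-two integrals $\int_{\gamma_i}\theta_1\cdot\theta_2$ over each single loop $\gamma_i$, and the ``cross'' products $\big(\int_{\gamma_i}\theta_1\big)\big(\int_{\gamma_j}\theta_2\big)$ with $i<j$. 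Writing $a=\int_\alpha\theta_1$, $b=\int_\beta\theta_1$, $a'=\int_\alpha\theta_2$ and $b'=\int_\beta\theta_2$, the single integrals are additive under concatenation and antisymmetric under reversal, so the contributions of $\alpha^{-1}$ and $\beta^{-1}$ to the cross terms are obtained from those of $\alpha,\beta$ simply by a sign change.

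First I would dispose of the diagonal terms over the inverse loops. By the reversing-the-path lemma, $\int_{\alpha^{-1}}\theta_1\cdot\theta_2=\int_\alpha\theta_2\cdot\theta_1$, and the shuffle relation (Theorem~\ref{shurel}) gives $\int_\alpha\theta_1\cdot\theta_2+\int_\alpha\theta_2\cdot\theta_1=aa'$; hence $\int_{\alpha^{-1}}\theta_1\cdot\theta_2=aa'-\int_\alpha\theta_1\cdot\theta_2$, and likewise for $\beta$. Thus the four diagonal terms collapse to $aa'+bb'$, the genuinely second-order integrals over $\alpha$ and $\beta$ cancelling in pairs. Next I would collect the six cross terms, which evaluate to $ab'-aa'-ab'-ba'-bb'+ab'=ab'-aa'-ba'-bb'$. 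Adding the diagonal and cross contributions, the $aa'$ and $bb'$ terms cancel and one is left with $ab'-ba'$, which is exactly $\int_\alpha\theta_1\int_\beta\theta_2-\int_\beta\theta_1\int_\alpha\theta_2$.

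The only genuinely delicate point is conceptual rather than computational: one must justify that $\theta_2=\frac{df_2}{f_2}\wedge\frac{df_3}{f_3}$, although a two-form built from a pair of logarithmic one-forms, may be manipulated as a single entry of the iterated integral, so that the composition, shuffle and reversal identities of Subsection~1.3 apply verbatim with $\theta_2$ in the role of the second form. This is where Chen's formalism \cite{Ch} enters: the relevant objects are iterated integrals over the membranes of Definition~\ref{def I}, and one must check that homotopy invariance and the above algebraic properties persist at the level of these membrane integrals. Once this bracketing of $\theta_2$ is in place, the remainder is the bookkeeping described above, whose only pitfalls are keeping careful track of the order reversal and of the signs introduced by passing to $\alpha^{-1}$ and $\beta^{-1}$.
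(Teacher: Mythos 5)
Your proof is correct and follows essentially the same route as the paper: the paper's own (one-line) proof is precisely to apply the composition-of-paths theorem to each constituent of $\alpha\beta\alpha^{-1}\beta^{-1}$, which is exactly your decomposition into diagonal and cross terms. Your fleshed-out bookkeeping, including the use of the reversal lemma and shuffle relation to reduce the diagonal terms over $\alpha^{-1},\beta^{-1}$ to $aa'-A$ and $bb'-B$, is the detail the paper leaves implicit, and your final cancellation yielding $\int_\alpha\theta_1\int_\beta\theta_2-\int_\beta\theta_1\int_\alpha\theta_2$ is accurate.
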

\proof It follows directly from Lemma \ref{lemma composition of paths} by applying it to each ingredient of the commutator.

As a direct consequence, we obtain the following:
\begin{corollary}
\label{prop commutator}
$$\int_{m_{[\alpha,\beta]}}
\frac{df_1}{f_1}\cdot\left(\frac{df_2}{f_2}\wedge\frac{df_3}{f_3}\right)
\in 
(2\pi i)^3 \Z.
$$
\end{corollary}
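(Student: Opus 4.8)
The plan is to invoke Lemma~\ref{le commutator} to reduce the membrane integral over the commutator to a difference of products of period integrals, and then to recognize each factor as an integer multiple of a power of $2\pi i$. First I would observe that the integral in the statement is exactly the quantity $\int_{[\alpha,\beta]}\theta_1\cdot\theta_2$ of Lemma~\ref{le commutator} with $\theta_1=\frac{df_1}{f_1}$ and $\theta_2=\frac{df_2}{f_2}\wedge\frac{df_3}{f_3}$. Applying that lemma yields
\begin{eqnarray*}
\int_{m_{[\alpha,\beta]}}\frac{df_1}{f_1}\cdot\left(\frac{df_2}{f_2}\wedge\frac{df_3}{f_3}\right)
=\int_{\alpha}\frac{df_1}{f_1}\int_{\beta}\frac{df_2}{f_2}\wedge\frac{df_3}{f_3}\\
-\int_{\beta}\frac{df_1}{f_1}\int_{\alpha}\frac{df_2}{f_2}\wedge\frac{df_3}{f_3},
\end{eqnarray*}
so it suffices to show that each one-form factor $\int\frac{df_1}{f_1}$ lies in $2\pi i\,\Z$ and each two-form factor $\int\frac{df_2}{f_2}\wedge\frac{df_3}{f_3}$ lies in $(2\pi i)^2\,\Z$.

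For the one-form factors, the integral of $\frac{df_1}{f_1}=d\log f_1$ over the closed loop $\alpha$ (resp.\ $\beta$) equals $2\pi i$ times the winding number of $f_1$ along that loop, by the argument principle, and hence lies in $2\pi i\,\Z$. For the two-form factors, I would note that $\frac{1}{(2\pi i)^2}\frac{df_2}{f_2}\wedge\frac{df_3}{f_3}$ is the pullback under the map $(f_2,f_3)\colon X\to(\C^\times)^2$ of the positive generator of $H^2((\C^\times)^2,\Z)\cong\Z$; since the membrane $m_\alpha$ (resp.\ $m_\beta$) attached to a closed loop is a closed $2$-cycle, integrating the pullback of an integral cohomology class over it returns an integer, so each such factor lies in $(2\pi i)^2\,\Z$. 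Combining, each product term lies in $(2\pi i\,\Z)\cdot\bigl((2\pi i)^2\,\Z\bigr)=(2\pi i)^3\,\Z$, and the difference of two such terms again lies in $(2\pi i)^3\,\Z$, which is the assertion.

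The step I expect to be the main obstacle is the integrality of the two-form membrane periods, namely verifying that $m_\alpha$ and $m_\beta$ genuinely define closed $2$-cycles on which the cohomological degree argument applies. Here one must check that, away from the divisors of $f_2$ and $f_3$, the membrane closes up so that its boundary does not contribute, and that the image $2$-cycle carries a well-defined integral degree with respect to $(f_2,f_3)$. I would handle this by appealing to the membrane construction $m_\sigma$ of Subsection~\ref{subsec 2 foliations} together with the homotopy-type computation recorded there, which controls the boundary behaviour and guarantees that only the integral winding and degree contributions survive.
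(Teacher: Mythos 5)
Your proposal takes essentially the same route as the paper: the paper obtains this corollary ``as a direct consequence'' of Lemma~\ref{le commutator}, which is precisely the reduction you perform, with the integrality of the resulting factors (loop periods of $\frac{df_1}{f_1}$ in $2\pi i\,\Z$, torus periods of $\frac{df_2}{f_2}\wedge\frac{df_3}{f_3}$ in $(2\pi i)^2\,\Z$) left implicit there. Your explicit justification of those period integrality statements, including the appeal to the membrane/torus construction of Subsection~\ref{subsec 2 foliations}, simply fills in details the paper omits, so the two arguments coincide.
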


Following Chen, we obtain the $1$-form 
\[\int\frac{df_1}{f_1}\circ\left(\frac{df_2}{f_2}\wedge\frac{df_3}{f_3}\right)\]
on the loop space is closed since

(1) $\frac{df_1}{f_1}$ and $\frac{df_2}{f_2}\wedge\frac{df_3}{f_3}$ are closed and

(2)  $\frac{df_1}{f_1}\wedge\frac{df_2}{f_2}\wedge\frac{df_3}{f_3}=0.$

Since, we have a closed form it follows that  the integral is homotopy invariant. 
Thus, we can take a relation in the fundamental group of a curve embedded in the surface. More precisely, we take
\[\delta=[\alpha_1,\beta_1]\dots[\alpha_g,\beta_g]\sigma_1\dots\sigma_n\]
for a curve $C_0$ of genus $g$ with $n$ punctures.

For each of the above loops we associate a torus.

Let $\tau$ be a simple loop around $C_0$ in $X-C_0-\left(\bigcup_{i=1}^M G_{g(U^\e_i)}\right)$.
Let $\sigma$ be a loop on the curve $C_0$.
We define a {\it{membrane}} $m_{\sigma}$  associated to a loop $\sigma$ in $C^0$ by
$$m_{\sigma}:[0,1]^2\rightarrow X$$
and 
$$m_{\sigma}(s,t)\in F_{f(\tau(t))}\cap G_{g(\sigma(s))}.$$
Note that for fixed values of $s$ and $t$, we have that
$$F_{f(\tau(t))}\cap G_{g(\sigma(s))}$$
consists of finitely many points, where $F$ and $G$ are foliations
satisfying the Conditions in Subsection \ref{subsec 2 foliations}.

{\bf{Claim:}} The image of $m_\sigma$ is a torus.

Indeed, consider a tubular neighborhood around a loop $\sigma$ on the curve $C_0$. One can take the following tubular neighborhood: 
\[\bigcup_{|v|<\e}F_v\cap G_{g(\sigma)}\]
of $F_v\cap G_{g(\sigma)}$. Its boundary is $F_{f(\tau)}\cap G_{g(\sigma)}$, where $\tau$ is a simple loop around $C_0$ on $X-\bigcup_{i=1}^n C_i -\bigcup_{j=1}^m D_j$ and $|f(\tau(t))|=\e$.

In the last section we will associate a Contou-Carrere symbol to a simple loop $\sigma_i$, 
namely,  $I^{1,2}_{m_{\sigma_i}}(f_1,f_2,f_3)$. By the above corollary we have that 
$I^{1,2}_{m_{[\alpha_i,\beta_i]}}(f_1,f_2,f_3)$ is an integer multiple of $(2pi i)^3$

Then $I^{1,2}_{m_\delta}=0$, since $\delta$ is homotopic to the trivial path.
Also, by the  Lemma \ref{lemma composition of paths}, we have
$0=I^{1,2}_{m_\delta}(f_1,f_2,f_3)=\sum_{i=1}^nI^{1,2}_{m_{{\sigma_i}}}(f_1,f_2,f_3) +(2\pi i)^3\Z$,

\section{Countou-Carrere symbol for surfaces and its reciprocity laws}

\subsection{Cocycle on the loop space of a surface}

\subsection{Semi-local symbol}

In this subsection we present computation of the Contou-Carrere symbol
for all possible factors from the formal infinite product. 
Let 
\[f_1=x^{\nu_1(f_1)}y^{\nu_2(f_1)}\prod_{i_1>-N}\prod_{j_1>-N_{i_1}}(1-a_{i_1,j_1}x^{i_1}y^{j_1}),\]
\[f_2=x^{\nu_1(f_2)}y^{\nu_2(f_2)}\prod_{i_2>-N}\prod_{j_2>-N_{i_2}}(1-a_{i_2,j_2}x^{i_2}y^{j_2}),\]
\[f_3=x^{\nu_1(f_3)}y^{\nu_2(f_3)}\prod_{i_3>-N}\prod_{j_3>-N_{i_3}}(1-a_{i_3,j_3}x^{i_3}y^{j_3}).\]
We consider an integral over a torus of
$\log(f_1)\frac{df_2}{f_2}\wedge\frac{df_3}{f_3}$ in the following cases:
The function $f_1$ is either $x^{i_1}y^{j_1}$ or $1-x^{i_1}y^{j_1}$. 
The function $f_2$ is either $x^{i_2}y^{j_2}$ or $1-x^{i_2}y^{j_2}$.  
 The function $f_3$ is either $x^{i_3}y^{j_3}$ or $1-x^{i_3}y^{j_3}$. 
For each of the functions there are two possibilities. For the triple $(f_1,f_2,f_3)$ there are $2^3$ possibilities, which we list in the following $8=2^3$ cases.
We define the two dimensional Contou-Carrere symbol
as a cyclic symmetrization of  
\[\exp\left(\int\int_{T}\log(f_1)\frac{df_2}{f_2}\wedge\frac{df_3}{f_3}\right),\]
where $T$ is a torus of the type  $m_\sigma$ from Section 1.
At the next 8 cases, we examine the logarithm of the Countou-Carrere symbol,
when each of the functions $f_1,f_2,f_3$ consists of a single factor of the above infinite products.

At the end of the paper, we compute a more complicated case, which will be useful when we consider complex analytic products instead of products coming from Witt parameters.

\begin{enumerate}
\item[Case 1:]
Let $f_1=1-ax^{i_1}y^{j_1}$, $f_2=1-bx^{i_2}y^{j_2}$ and $f_3=1-cx^{i_3}y^{j_3}$. Then

\begin{align*}
&\int_T\log(f_1)\frac{df_2}{f_2}\wedge\frac{df_3}{f_3}=\\
&\int_0^1\int_0^1
\left(\int_{(0,0)}^{(\theta_1,\theta_2)}\sum_{n_1=1}^{\infty}-i_1a^{n_1}\epsilon^{n_1i_1}\epsilon^{n_1j_1}
(\exp(2\pi\sqrt{-1}n_1i_1\theta_1')d\theta_1'+\exp(2\pi\sqrt{-1}n_1i_1\theta_1')d\theta_2'\right)\times\\
&\times\sum_{n_2,n_3=1}^\infty
(i_2j_3-i_3j_2)
b^{n_2}c^{n_3}
\epsilon_1^{n_2i_2+n_3i_3}
\epsilon_2^{n_2j_2+n_3j_3}\times\\
&\times
\exp(2\pi\sqrt{-1}(n_2i_2+n_3i_3)\theta_1)
\exp(2\pi\sqrt{-1}(n_2j_2+n_3j_3)\theta_2)
d \theta_1\wedge d\theta_2=\\
=&\int_0^1\int_0^1-i_1i_2j_3
\times\\
&\times\sum_{n_1,n_2,n_3=1}^\infty\frac{1}{i_1n_1}a^{n_1}b^{n_2}c^{n_3}
\epsilon_1^{n_1i_1+n_2i_2+n_3i_3}
\epsilon_2^{n_1j_1+n_2j_2+n_3j_3}\times\\
&\times
(
\exp(2\pi\sqrt{-1}(n_1i_1+n_2i_2+n_3i_3)\theta_1)
\exp(2\pi\sqrt{-1}(n_1j_1+n_2j_2+n_3j_3)\theta_2)-\\
&
-
\exp(2\pi\sqrt{-1}(n_2i_2+n_3i_3)\theta_1)
\exp(2\pi\sqrt{-1}(n_2j_2+n_3j_3)\theta_2)
)
d \theta_1\wedge d\theta_2=\\
=&(i_2j_3-i_3j_2)
\times\\
&\times\sum_{n_1,n_2,n_3=1}^\infty\frac{1}{n_1}a^{n_1}b^{n_2}c^{n_3}
\epsilon_1^{n_1i_1+n_2i_2+n_3i_3}
\epsilon_2^{n_1j_1+n_2j_2+n_3j_3}\times\\
&\times
\int_0^1\int_0^1
\exp(2\pi\sqrt{-1}(n_2i_2+n_3i_3)\theta_1)
\exp(2\pi\sqrt{-1}(n_2j_2+n_3j_3)\theta_2)
d \theta_1\wedge d\theta_2=\\
&-(i_2j_3-i_3j_2)
\times\\
&\times\sum_{n_1,n_2,n_3=1}^\infty\frac{1}{n_1}a^{n_1}b^{n_2}c^{n_3}
\epsilon_1^{n_1i_1+n_2i_2+n_3i_3}
\epsilon_2^{n_1j_1+n_2j_2+n_3j_3}\times\\
&\times
\int_0^1\int_0^1\exp(2\pi\sqrt{-1}(n_1i_1+n_2i_2+n_3i_3)\theta_1)\times\\
&\times
\exp\left(2\pi\sqrt{-1}(n_1j_1+n_2j_2+n_3j_3)\theta_2\right)
d \theta_1\wedge d\theta_2=\\
=&-(i_2j_3-i_3j_2)
\sum^\infty_{
\begin{tabular}{cc}
$n_1,n_2,n_3=1$\\
${\bf{n}}\cdot{\bf{i}}=0$\\
${\bf{n}}\cdot{\bf{j}}=0$
\end{tabular}}
\frac{a^{n_1}b^{n_2}c^{n_3}}{n_1}\\
\end{align*}

Put
$m_k
=
\left|
\begin{tabular}{cc}
$i_{k+1}$&$i_{k+2}$\\
$j_{k+1}$&$j_{k+2}$
\end{tabular}
\right|,$
where the indices vary modulo $3$.
Let $d=gcd(m_1,m_2,m_3)$.
Then \[n_1=k\frac{|m_1|}{d},\,\,\,n_2=k\frac{|m_2|}{d},\,\,\,n_3=k\frac{|m_3|}{d}.\]

\begin{align*}
&i_2j_3\sum^\infty_{
\begin{tabular}{cc}
$n_1,n_2,n_3=1$\\
${\bf{n}}\cdot{\bf{i}}=0$\\
${\bf{n}}\cdot{\bf{j}}=0$
\end{tabular}}
\frac{a^{n_1}b^{n_2}c^{n_3}}{n_1}=\\
=&\sum^\infty_{k=1}
-i_2j_3\frac{a^{k|m_1|/d}b^{k|m_2|/d}c^{k|m_3|/d}}{k|m_1|/d}=\\
=&sign(m_1)\cdot d \cdot \sum^\infty_{k=1}
\frac{a^{k|m_1|/d}b^{k|m_2|/d}c^{kn_3}}{k}=\\
=&sign(m_1)\cdot d\cdot\log\left(1-a^{|m_1|/d}b^{|m_2|/d}c^{|m_3|/d}\right)
\end{align*}

\item[Case 2:]

Let $f_1=ax^{i_1}y^{j_1}, f_2=1-bx^{i_2}y^{j_2}, f_3=1-cx^{i_3}y^{j_3}$.

\begin{align*}
&\int\int_{T_0}\frac{df_1}{f_1}\circ\left(\frac{df_2}{f_2}\wedge\frac{df_3}{f_3}\right)=\\
=&\int_0^1\int_0^1 i_1\log(x)\times\\ 
&\times\sum_{n_2,n_3=1}^\infty
(i_2j_3-i_3j_2)
b^{n_2}c^{n_3}
x^{n_2i_2+n_3i_3}
y^{n_2j_2+n_3j_3}
\frac{dx}{x}\wedge \frac{dy}{y}
\end{align*}

If $n_2j_2+n_3j_3\neq 0$ then the integral vanishes.
Let  $n_2j_2+n_3j_3= 0$. Then \[n_2=k|j_3|/gcd(j_2,j_3)\] and
 \[n_3=k|j_2|/gcd(j_2,j_3).\] Moreover, we have a geometric series
 under the integral, namely,
\begin{align*}
g(x)=&\sum_{k=1}^\infty b^{n_2}c^{n_3}x^{n_2i_2+n_3i_3}=\\
&=
 \sum_{k=1}^\infty \left(b^{|j_3|/gcd(j_2,j_3)}c^{|j_2|/gcd(j_2,j_3)}x^{(|j_3|i_2+|j_2|i_3)/gcd(j_2,j_3)}\right)^k=\\
&=b^{|j_3|/gcd(j_2,j_3)}c^{|j_2|/gcd(j_2,j_3)}x^{sign(j_3)m_1/gcd(j_2,j_3)}\times\\
&\times
\left(1-b^{|j_3|/gcd(j_2,j_3)}c^{|j_2|/gcd(j_2,j_3)}x^{sign(j_3)m_1/gcd(j_2,j_3)}\right)^{-1}
 \end{align*}
Then \[g(x)\frac{dx}{x}=-d\log\left(1-b^{|j_3|/gcd(j_2,j_3)}c^{|j_2|/gcd(j_2,j_3)}x^{sign(j_3)m_1/gcd(j_2,j_3)}\right)\]
Let \[h(x)=1-b^{|j_3|/gcd(j_2,j_3)}c^{|j_2|/gcd(j_2,j_3)}x^{sign(j_3)m_1/gcd(j_2,j_3)}.\]
Then we have that  Case 2 is the logarithm of the 1 dimensional Contou-Carrere symbol of $x^{i_1}$ and $h(x)$
times $i_1(i_2j_3-i_3j_2)$.

Alternatively, if we sum term by term we can use the following Lemma.
\begin{lemma}
If $k\in \Z$ and $k\neq 0$ then
$\int_0^1\theta e^{2\pi\sqrt{-1}k\theta}d\theta=\frac{1}{2\pi \sqrt{-1}k}.$
\end{lemma}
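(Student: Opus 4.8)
The plan is to evaluate the integral directly by integration by parts, since the integrand is the product of the polynomial $\theta$ with the exponential $e^{2\pi\sqrt{-1}k\theta}$. First I would set $u=\theta$ and $dv=e^{2\pi\sqrt{-1}k\theta}\,d\theta$, so that $du=d\theta$ and, because $k\neq 0$, the antiderivative $v=\frac{1}{2\pi\sqrt{-1}k}e^{2\pi\sqrt{-1}k\theta}$ is well defined. This is precisely the point at which the hypothesis $k\neq 0$ is used: it guarantees we are not dividing by zero when forming $v$.

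Applying the integration-by-parts formula produces a boundary term $\left[\frac{\theta}{2\pi\sqrt{-1}k}e^{2\pi\sqrt{-1}k\theta}\right]_0^1$ minus the remaining integral $\frac{1}{2\pi\sqrt{-1}k}\int_0^1 e^{2\pi\sqrt{-1}k\theta}\,d\theta$. The key observation, and the only place where the integrality of $k$ enters, is that $e^{2\pi\sqrt{-1}k}=1$ whenever $k\in\Z$. Hence the boundary term at $\theta=1$ equals $\frac{1}{2\pi\sqrt{-1}k}$, while the contribution at $\theta=0$ vanishes because of the factor $\theta$.

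For the remaining integral I would again invoke $e^{2\pi\sqrt{-1}k}=1$ to get $\int_0^1 e^{2\pi\sqrt{-1}k\theta}\,d\theta=\frac{1}{2\pi\sqrt{-1}k}\left(e^{2\pi\sqrt{-1}k}-1\right)=0$. Combining the two computations gives $\frac{1}{2\pi\sqrt{-1}k}-0=\frac{1}{2\pi\sqrt{-1}k}$, which is exactly the claimed value.

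There is no genuine obstacle here, as the statement is an elementary exercise; the only thing to be careful about is to record explicitly that both simplifications rest on $k$ being a \emph{nonzero integer}, since $k\in\Z$ is what forces $e^{2\pi\sqrt{-1}k}=1$ and $k\neq 0$ is what makes the antiderivative $v$ legitimate. If one wished to avoid integration by parts, an alternative would be to introduce the auxiliary function $F(s)=\int_0^1 e^{2\pi\sqrt{-1}k s\theta}\,d\theta$, differentiate under the integral sign in the parameter $s$, and evaluate at $s=1$; but the integration-by-parts route is the cleanest and is what I would present.
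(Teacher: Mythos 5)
Your proof is correct and is essentially identical to the paper's: the paper also integrates by parts, writing $\theta\, e^{2\pi\sqrt{-1}k\theta}d\theta=\frac{1}{2\pi\sqrt{-1}k}\theta\, d\bigl(e^{2\pi\sqrt{-1}k\theta}\bigr)$, and then uses $e^{2\pi\sqrt{-1}k}=1$ to evaluate the boundary term and kill the remaining integral. Your write-up is just a slightly more explicit version of the same computation, with the roles of $k\in\Z$ and $k\neq 0$ clearly flagged.
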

\proof
\begin{align*}
\int_0^1\theta e^{2\pi\sqrt{-1}k\theta}d\theta
&=\frac{1}{2\pi \sqrt{-1}k}\int_0^1\theta d e^{2\pi\sqrt{-1}k\theta}=\\
&=\frac{1}{2\pi \sqrt{-1}k}(\theta e^{2\pi\sqrt{-1}k\theta}|_0^1-\int_0^1e^{2\pi\sqrt{-1}k\theta}d\theta)=\\
&=\frac{1}{2\pi \sqrt{-1}k}
\end{align*}

Then
\begin{align*}
&I^{1,2}(f_1,f_2,f_3)=\\
&\int_0^1\int_0^1 2\pi\sqrt{-1}i_1\theta_1\times\\ 
&\times\sum_{n_2,n_3=1}^\infty
(i_2j_3-i_3j_2)
b^{n_2}c^{n_3}
\epsilon_1^{n_2i_2+n_3i_3}
\times\\
&\times
\exp\left(2\pi\sqrt{-1}(k|j_3|i_2/d+k|j_2|i_3/d)\theta_1\right)
d \theta_1\wedge d\theta_2=\\
=&\sum_{k_1=1}^\infty
2\pi\sqrt{-1}i_1(i_2j_3-i_3j_2)
\frac{\left(b^{|j_3|/d}c^{|j_2|/d}\epsilon_1^{(|j_3|i_2/d+|j_2|i_3/d})\right)^{k_1}}{2\pi \sqrt{-1}k_1|j_3|i_2/d+k_1|j_2|i_3/d}+\\
=&sign(j_3)\cdot i_1\cdot d \cdot
\sum_{k_1=1}^\infty
\frac{\left(b^{|j_3|/d}c^{|j_2|/d}\epsilon_1^{(|j_3|i_2/d+|j_2|i_3/d})\right)^{k_1}}{k_1}=\\
=&-sign(j_3)\cdot i_1\cdot d \cdot \log \left(1-b^{|j_3|/d}c^{|j_2|/d}\epsilon_1^{sign(j_3)m_1/d}\right)
\end{align*}

Then
\begin{align*}
&I^{2,1}(f_1,f_2,f_3)=\\
&\int_0^1\int_0^1 2\pi\sqrt{-1}j_1\theta_2\times\\ 
&\times\sum_{n_2,n_3=1}^\infty
(i_2j_3-i_3j_2)
b^{n_2}c^{n_3}
\epsilon_1^{n_2i_2+n_3i_3}
\times\\
&\times
\exp\left(2\pi\sqrt{-1}(n_2j_2+n_3j_3)\theta_2\right)
d \theta_1\wedge d\theta_2=\\
=&\sum_{k_2=1}^\infty
2\pi\sqrt{-1}j_2(i_2j_3-i_3j_2)
\frac{\left(b^{|i_3|/d}c^{|i_2|/d}\epsilon_1^{(|i_3|j_2/d+|i_2|j_3/d})\right)^{k_2}}{2\pi \sqrt{-1}k_2|i_3|j_2/d+k_2|i_2|j_3/d}+\\
=&sign(i_3)\cdot i_1\cdot d \cdot
\sum_{k_2=1}^\infty
\frac{\left(b^{|i_3|/d}c^{|i_2|/d}\epsilon_2^{(|i_3|j_2/d+|i_2|j_3/d})\right)^{k_2}}{k_2}=\\
=
&-sign(i_3)\cdot j_1\cdot d \cdot \log \left(1-b^{|i_3|/d}c^{|i_2|/d}\epsilon_2^{sign(i_3)m_1/d}\right)
\end{align*}

\item[Case 3:] Let $f_1=1-ax^{i_1}y^{j_1}, f_2=bx^{i_2}y^{j_2}, f_3=1-cx^{i_3}y^{j_3}$.

\begin{align*}
&\int\int_{T_0}\frac{df_1}{f_1}\circ\left(\frac{df_2}{f_2}\wedge\frac{df_3}{f_3}\right)=\\
=&\int_0^1\int_0^1
\left(\int_0^{\theta_1}\sum_{n_1=1}^{\infty}-i_1a^{n_1}\epsilon_1^{n_1i_1}\epsilon_2^{n_1j_1}
\exp\left(2\pi\sqrt{-1}n_1(i_1\theta_1'+j_1\theta_2\right)(d\theta_1'\right)\times\\
&\times(2\pi \sqrt{-1})(i_2d\theta_1+j_2d\theta_2)\wedge\\
&\wedge\sum_{n_3=1}^\infty
c^{n_3}
\epsilon_1^{n_3i_3}
\epsilon_2^{n_3j_3}
\exp\left(2\pi\sqrt{-1}(n_3i_3)\theta_1\right)
\exp\left(2\pi\sqrt{-1}(n_3j_3)\theta_2\right)
(i_3d\theta_1+j_3d\theta_2)=\\
=&\sum_{n_1,n_3=1}^\infty
\frac{i_1(i_2j_3-i_3j_2)}{n_1i_1}a^{n_1}c^{n_3}\epsilon_1^{n_1i_1+n_3i_3}\epsilon_2^{n_1j_1+n_3j_3}\times\\
&\times\int_0^1\int_0^1
\exp\left(2\pi\sqrt{-1}(n_1i_1+n_3i_3)\theta_1\right)
\exp\left(2\pi\sqrt{-1}(n_1j_1+n_3j_3)\theta_2\right)d\theta_1\wedge d\theta_2=\\
\end{align*}
The last double integral vanishes if $n_1i_1+n_3i_3\neq 0$ or if $n_1j_1+n_3j_3\neq 0$. If both
 $n_1i_1+n_3i_3= 0$ and $n_1j_1+n_3j_3=0$ then the two vectors $(i_1,i_3)$
 and $(j_1,j_3)$ are linearly dependent. Let 
 $n_1=k|i_3|/(i_1,i_3)$ and $n_3=k|i_1|/(i_1,i_3)$.

Note that $i_2j_3-i_3j_2=0$. 
Then the contribution from Case 3: becomes
\begin{align*}
I_3=&\sum_{k=1}^{\infty}\frac{i_2j_3}{n_1}a^{n_1}c^{n_3}=\\
=&\sum_{k_1=1}^\infty\frac{i_2j_3(j_1,j_3)}{|j_3|k_1}\left(a^{|j_3|/(j_1,j_3)}c^{|j_1|/(j_1,j_3)}\right)^{k_1}-\\
&-\sum_{k_2=1}^\infty\frac{i_3j_2(i_1,i_3)}{|i_3|k_2}\left(a^{|i_3|/(i_1,i_3)}c^{|i_1|/(i_1,i_3)}\right)^{k_2}=\\
=&-sign(j_3)j_2(j_1,j_3)\log\left(1-a^{|j_3|/(j_1,j_3)}c^{|j_1|/(j_1,j_3)}\right)+\\
&+sign(i_3)j_2(i_1,i_3)\log\left(1-a^{|i_3|/(i_1,i_3)}c^{|i_1|/(i_1,i_3)}\right)
\end{align*}

\item[Case 4:] Let $f_1=1-ax^{i_1}y^{j_1}, f_2=1-bx^{i_2}y^{j_2}, f_3=cx^{i_3}y^{j_3}$.

The contribution from Case 4 is similar to Case 3.
\begin{align*}
I_4=&\sum_{k=1}^{\infty}\frac{i_2j_3-i_3j_2}{n_1}a^{n_1}b^{n_2}&=\\
=&\sum_{k_1=1}^\infty\frac{i_2j_3(i_1,i_2)}{|i_2|k_1}\left(a^{|i_2|/(i_1,i_2)}b^{|i_1|/(i_1,i_2)}\right)^{k_1}-\\
&-\sum_{k_2=1}^\infty\frac{i_3j_2(j_1,j_2)}{|j_2|k_2}\left(a^{|j_2|/(j_1,j_2)}b^{|j_1|/(j_1,j_2)}\right)^{k_2}=
\\
=&-sign(i_2)j_3(i_1,i_2)\log\left(1-a^{|i_2|/(i_1,i_2)}b^{|i_1|/(i_1,i_2)}\right)
+\\
&+
sign(j_2)i_3(j_1,j_2)\log\left(1-a^{|j_2|/(j_1,j_2)}b^{|j_1|/(j_1,j_2)}\right)
\end{align*}

\item[Case 5:]

Let $f_1=x^{i_1}y^{j_1}, f_2=x^{i_2}y^{j_2}, f_3=C(1-cx^{i_3}y^{j_3})$.

\begin{align*}
&\int\int_{T_0}\frac{df_1}{f_1}\circ\left(\frac{df_2}{f_2}\wedge\frac{df_3}{f_3}\right)=\\
=&\int_0^1\int_0^1
\left(
\int_{(0,0)}^{(\theta_1,\theta_2)}(i_1d\theta_1'+j_1d\theta_2'\right)
\times\\
&\times(2\pi \sqrt{-1})(i_2d\theta_1+j_2d\theta_2)\wedge\\
&\wedge\sum_{n_3=1}^\infty
-c^{n_3}
\epsilon_1^{n_3i_3}
\epsilon_2^{n_3j_3}
\exp\left(2\pi\sqrt{-1}(n_3i_3)\theta_1\right)
\exp\left(2\pi\sqrt{-1}(n_3j_3)\theta_2\right)
(i_3d\theta_1+j_3d\theta_2)=\\
=&\int_0^1\int_0^1
m_1i_1\theta_1\times\\
&\times
\sum_{n_3=1}^\infty
c^{n_3}
\epsilon_1^{n_3i_3}
\epsilon_2^{n_3j_3}
\exp\left(2\pi\sqrt{-1}(n_3i_3)\theta_1\right)
\exp\left(2\pi\sqrt{-1}(n_3j_3)\theta_2\right)
d \theta_1\wedge d\theta_2+\\
&+\int_0^1\int_0^1
j_1m_1\theta_2\times\\
&\times
\sum_{n_3=1}^\infty
c^{n_3}
\epsilon_1^{n_3i_3}
\epsilon_2^{n_3j_3}
\exp\left(2\pi\sqrt{-1}(n_3i_3)\theta_1\right)
\exp\left(2\pi\sqrt{-1}(n_3j_3)\theta_2\right)
d \theta_1\wedge d\theta_2
\end{align*}

The last integral is different from zero only if $i_3=0$. In that case, we have

\begin{align*}
&\int\int_{T_0}\frac{df_1}{f_1}\circ\left(\frac{df_2}{f_2}\wedge\frac{df_3}{f_3}\right)=\\
=&\int_0^1\int_0^1
i_1\theta_1(-i_3j_2)\times\\
&\times\sum_{n_3=1}^\infty
c^{n_3}
\epsilon_1^{n_3i_3}
\epsilon_2^{n_3j_3}
\exp\left(2\pi\sqrt{-1}(n_3i_3)\theta_1\right)
\exp\left(2\pi\sqrt{-1}(n_3j_3)\theta_2\right)
d \theta_1\wedge d\theta_2+\\
&+
\int_0^1\int_0^1
j_1\theta_2(i_2j_3)\times\\
&\times\sum_{n_3=1}^\infty
c^{n_3}
\epsilon_1^{n_3i_3}
\epsilon_2^{n_3j_3}
\exp\left(2\pi\sqrt{-1}(n_3i_3)\theta_1\right)
\exp\left(2\pi\sqrt{-1}(n_3j_3)\theta_2\right)
d \theta_1\wedge d\theta_2=\\
=&\sum_{n_3=1}^\infty
\frac{-i_1i_3j_2}{n_3i_3}c^{n_3}\epsilon_1^{n_3i_3}+\\
&+\sum_{n_3=1}^\infty
\frac{i_1i_2j_3}{n_3i_3}c^{n_3}\epsilon_2^{n_3j_3}=\\
=&-\frac{i_1i_3j_2}{i_3}\sum_{n_3=1}^{\infty}\frac{(c\epsilon_1^{i_3})^{n_3}}{n_3}
+
\frac{i_1i_2j_3}{j_3}\sum_{n_3=1}^{\infty}\frac{(c\epsilon_1^{i_3})^{n_3}}{n_3}
=\\
=&i_1j_2\log(1-c\epsilon_1^{i_3})-i_2j_1\log(1-c\epsilon_2^{i_3})
\end{align*}

Thus, \[I_5=i_1j_2\log(1-c(-P_1)^{i_3})-i_2j_1\log(1-c(-P_2)^{j_3})\]

\item[6:]

Let $f_1=x^{i_1}y^{j_1}, f_2=B(1-bx^{i_2}y^{j_2}), f_3=x^{i_3}y^{j_3}$.
Similarly to case 5, we obtain

 \[I_6=i_3j_1\log(1-b(-P_1)^{i_2})-i_1j_3\log(1-b(-P_2)^{j_2})\]

\item[7:]

Let $f_1=A(1-ax^{i_1}y^{j_1}), f_2=x^{i_2}y^{j_2}, f_3=x^{i_3}y^{j_3}$.

\begin{align*}
&\int\int_{T_0}\frac{df_1}{f_1}\circ\left(\frac{df_2}{f_2}\wedge\frac{df_3}{f_3}\right)=\\
=&\int_0^1\int_0^1
\left(\int_0^{\theta_1}\sum_{n_1=1}^{\infty}-i_1a^{n_1}\epsilon_1^{n_1i_1}\epsilon_2^{n_1j_1}
\exp(2\pi\sqrt{-1}n_1i_1\theta_1')
\exp(2\pi\sqrt{-1}n_1j_1\theta_2)
d\theta_1'\right)\times\\
&\times i_2j_31d\theta_1\wedge d\theta_2=\\
=&\int_0^1\int_0^1\sum_{n_1=1}^\infty -\frac{i_1i_2j_3a^{n_1}\epsilon_1^{n_1i_1}\epsilon_2^{n_1j_1}}{n_1i_1}\times\\
&\times
\left(\exp\left(2\pi\sqrt{-1}(n_1i_1)\theta_1\right)-1\right)\exp(2\pi\sqrt{-1}n_1j_1\theta_2)
d \theta_1\wedge d\theta_2
\end{align*}
The integral is zero if $j_1\neq 0$. If $j_2=0$ then
we obtain
\begin{align*}
&\int\int_{T_0}\frac{df_1}{f_1}\circ\left(\frac{df_2}{f_2}\wedge\frac{df_3}{f_3}\right)=\\
=&-\int_0^1
\sum_{n_1=1}^\infty -\frac{i_1i_2j_3a^{n_1}\epsilon_1^{n_1i_1}}{n_1i_1}
\exp\left(2\pi\sqrt{-1}(n_1i_1)\theta_1\right)d \theta_1-\\
&\sum_{n_1=1}^\infty -\frac{i_1i_2j_3a^{n_1}\epsilon_1^{n_1i_1}}{n_1i_1}=
0-i_2j_3\log(1-a\epsilon_1^{i_1})
\end{align*}

Note that the last integral vanishes if $i_1\neq 0$. However, if $i_1=0$ and $j_1=0$ then $f_1=1$ and the integral vanishes again.
 Thus,
 
\[I_7=-m_1\log(1-aR_1^{i_1})\]

\item[8:] Let $f_1=x^{i_1}y^{j_1}, f_2=x^{i_2}y^{j_2}, f_3=x^{i_3}y^{j_3}$.

For this case it is better to use the notation $I^{1,2}(f_1,f_2,f_3)$ and $I^{2,1}(f_1,f_2,f_3)$ from Subsection 1.3. Using that the logarithm of the Parshin symbol (see \cite{rec2}) can be written
as 
\begin{align*}
&I^{1,2}(f_1,f_2,f_3)
+I^{1,2}(f_3,f_1,f_2)
+I^{1,2}(f_2,f_3,f_1)-\\
&-I^{2,1}(f_1,f_2,f_3)
-I^{2,1}(f_3,f_1,f_2)
-I^{2,1}(f_1,f_2,f_3,f_3)
\end{align*}

The iteration $\frac{df_1}{f_1}\circ\left(\frac{df_2}{f_2}\wedge\frac{df_3}{f_3}\right)$ gives  $4$ from the above $6$ terms, namely
\[\frac{(2\pi i)^3}{2}(i_1i_2j_3+i_3i_1j_3-i_3j_1j_2-i_2j_3j_1)\]

The ones that are not present are monomials $i_2i_3j_1$ and $j_2j_3i_1$.

\end{enumerate}

\begin{definition} (Contou-Carrere symbol for surfaces)
Let 
\[f_1=a_1x^{\nu_1(f_1)}y^{\nu_2(f_1)}\prod_{i_1>-N}\prod_{j_1>-N_{i_1}}(1-a_{i_1,j_1}x^{i_1}y^{j_1}),\]
\[f_2=a_2x^{\nu_1(f_2)}y^{\nu_2(f_2)}\prod_{i_2>-N}\prod_{j_2>-N_{i_2}}(1-a_{i_2,j_2}x^{i_2}y^{j_2}),\]
\[f_3=a_3x^{\nu_1(f_3)}y^{\nu_2(f_3)}\prod_{i_3>-N}\prod_{j_3>-N_{i_3}}(1-a_{i_3,j_3}x^{i_3}y^{j_3}).\]
Let 
\[T(f_1,f_2,f_3)=\prod_{i_1,i_2,i_3,j_1,j_2,j_3}\left(1-a_{i_1,j_1}^{|m_1|/d}a_{i_2,j_2}^{|m_2|/d}a_{i_3,j_3}^{|m_3|/d}\right)^{sign(m_1)d},\]
where $m_k=i_{k+1}j_{k+2}-i_{k+2}j_{k+1}$ for $k$ modulo $3$ and $d$ is the greatest common divisor of $m_1,m_2,m_3$.
\[Q_1(f_1,f_2,f_3,P_1)=\prod_{\nu_1(f_1),i_2,i_3,j_2,j_3}
\left(1-a_{i_2,j_2}^{|j_3|/d}a_{i_3,j_3}^{|j_2|/(j_2,j_3)}P_1^{sign(j_3)m_1/(j_2,j_3)}\right)
^{-sign(j_3) \nu_1(f_1) (j_2,j_3)}\]
\[Q_2(f_1,f_2,f_3,P_2)=\prod_{\nu_2(f_1),i_2,i_3,j_2,j_3}
\left(1-a_{i_2,j_2}^{|i_3|/(i_2,i_3)}a_{i_3,j_3}^{|i_2|/(i_2,i_3)}P_2^{sign(i_3)m_1/d}\right)
^{sign(i_3) \nu_2(f_1) (i_2,i_3)}\]
\[Q_3(f_1,f_2,f_3)=\prod_{\nu_1(f_2),i_1,i_3,j_1,j_3} 
\left(1-a_{i_1,j_1}^{|i_3|/(i_1,i_3)}a_{i_3,j_3}^{|i_1|/(i_1,i_3)}\right)^{sign(i_3)\nu_2(f_2)(i_1,i_3)}\]
\[Q_4(f_1,f_2,f_3)=\prod_{\nu_2(f_2),i_1,i_3,j_1,j_3} 
\left(1-a_{i_1,j_1}^{|j_3|/(j_1,j_3)}a_{i_3,j_3}^{|j_1|/(j_1,j_3)}\right)^{-sign(j_3)\nu_1(f_2)(j_1,j_3)}\]
\[R_1(f_1,f_2,f_3,P_1)=\prod_{\nu_1(f_2),\nu_2(f_3),\nu_1(f_3),\nu_2(f_2),i_1}
(1-a_{i_1,0}P_1^{i_1})^{\nu_1(f_2)\nu_2(f_3)-\nu_1(f_3)\nu_2(f_2)}\]
\[R_2(f_1,f_2,f_3,P_2)=\prod_{\nu_2(f_2),\nu_2(f_3),\nu_1(f_3),\nu_2(f_2),j_1}
(1-a_{0,j_1}P_2^{j_1})^{\nu_1(f_2)\nu_2(f_3)-\nu_1(f_3)\nu_2(f_2)}\]
\[S(f_1,f_2,f_3)=(-1)^A, \mbox{ where }A=i_1i_2j_3+i_2i_3j_1+i_3i_1j_2-i_1j_2j_3-i_2j_3j_1-i_3j_1j_2\]
Then the Contou-Carrere symbol is a formal product
\begin{align*}
\{f_1,f_2,f_3\}_{C,0}^P =&T(f_1,f_2,f_3)S(f_1,f_2,f_3)\times\\
&
\times\prod_{cyclic}Q_1(f_1,f_2,f_3,P_1)Q_2(f_1,f_2,f_3,P_2)Q_3(f_1,f_2,f_3)Q_4(f_1,f_2,f_3)\times\\
&
\times\prod_{cyclic} R_1(f_1,f_2,f_3,P_1)R_2(f_1,f_2,f_3,P_2),
\end{align*}
where $\prod_{cyclic}$ is a product over a cyclic permutation of the order of the functions $f_1,f_2,f_3$.
\end{definition}

\subsection{Semi-local formulas}

Let $f_1=1-a_1x^{i_1}y^{j_1}$,
$f_2=1-a_2x^{i_2}y^{j_2}$,
$f_1=1-a_3x^{i_3}y^{j_3}-a_4x^{i_4}y^{j_4}$,

\begin{align*}
&\int_{T}\frac{df_1}{f_1}\circ\left(\frac{df_2}{f_2}\wedge\frac{df_3}{f_3}\right)=\\
&
=
\left|\begin{tabular}{cc}
$i_2$&$i_3$\\
$j_2$&$j_3$
\end{tabular}
\right|\int_T\sum_{n_1,n_2,n_3=1}^\infty\sum_{n_4=0}^\infty
-\frac{2}{n_1}\frac{(n_3+n_4)!}{n_3!n_4!}\prod_{k=1}^4a_k^{n_k}x^{i_kn_k}y^{j_kn_k}
\frac{dx}{x}\wedge\frac{dy}{y}=\\
&
-
\left|\begin{tabular}{cc}
$i_2$&$i_4$\\
$j_2$&$j_4$
\end{tabular}
\right|\int_T\sum_{n_1,n_2,n_4=1}^\infty\sum_{n_3=0}^\infty
\frac{2}{n_1}\frac{(n_3+n_4)!}{n_3!n_4!}\prod_{k=1}^4a_k^{n_k}x^{i_kn_k}y^{j_kn_k}
\frac{dx}{x}\wedge\frac{dy}{y}
\end{align*}

\begin{align*}
\left|\begin{tabular}{cc}
$i_2$&$i_3$\\
$j_2$&$j_3$
\end{tabular}
\right|\int_T\sum_{n_1,n_2,n_3=1}^\infty\sum_{n_4=0}^\infty
-\frac{2}{n_1}\frac{(n_3+n_4)!}{n_3!n_4!}\prod_{k=1}^4a_k^{n_k}x^{i_kn_k}y^{j_kn_k}
\frac{dx}{x}\wedge\frac{dy}{y}=\\
\end{align*}

Let $\left|\begin{tabular}{cc}
$i_2$&$i_3$\\
$j_2$&$j_3$
\end{tabular}
\right|\neq 0$. Then by row reduction we obtain the following

\begin{align*}
&\left[\begin{tabular}{ccccc}
$i_1$ &$i_2$ &$i_3$ &$i_4$\\
$j_1$ &$j_2$ &$j_3$ &$j_4$
\end{tabular}\right]
\rightarrow
\left[\begin{tabular}{ccccc}
$\left|\begin{tabular}{cc}
$i_1$&$i_3$\\
$j_1$&$j_3$
\end{tabular}
\right|$ &
$\left|\begin{tabular}{cc}
$i_2$&$i_3$\\
$j_2$&$j_3$
\end{tabular}
\right|$ &
$0$ &
$\left|\begin{tabular}{cc}
$i_4$&$i_3$\\
$j_4$&$j_3$
\end{tabular}
\right|$\\
\\
$\left|\begin{tabular}{cc}
$i_2$&$i_1$\\
$j_2$&$j_1$
\end{tabular}
\right|$ &
$0$&
$\left|\begin{tabular}{cc}
$i_2$&$i_3$\\
$j_2$&$j_3$
\end{tabular}
\right|$ &
$\left|\begin{tabular}{cc}
$i_2$&$i_4$\\
$j_2$&$j_4$
\end{tabular}
\right|$ 
\end{tabular}\right]
\end{align*}

Then the sums have to vanish, $\sum{k=1}^4i_kn_k=\sum_{k=1}^4 j_kn_k=0$.
Using the above row reduction, we obtain 
\begin{align*}
&n_1=
\left|\begin{tabular}{cc}
$i_2$&$i_3$\\
$j_2$&$j_3$
\end{tabular}
\right|k_1/d\\
&n_2=\left(-\left|\begin{tabular}{cc}
$i_4$&$i_3$\\
$j_4$&$j_3$
\end{tabular}
\right|k_4
-\left|\begin{tabular}{cc}
$i_2$&$i_3$\\
$j_2$&$j_3$
\end{tabular}
\right|k_1\right)/d\\
&n_3=\left(-\left|\begin{tabular}{cc}
$i_2$&$i_4$\\
$j_2$&$j_4$
\end{tabular}
\right|k_4
-\left|\begin{tabular}{cc}
$i_2$&$i_1$\\
$j_2$&$j_1$
\end{tabular}
\right|k_1\right)/d\\
&n_4=\left|\begin{tabular}{cc}
$i_2$&$i_3$\\
$j_2$&$j_3$
\end{tabular}
\right|k_4/d,
\end{align*}
where $d$ is the greatest common divisor of 
$\left|\begin{tabular}{cc}
$i_2$&$i_3$\\
$j_2$&$j_3$
\end{tabular}
\right|,$
$\left|\begin{tabular}{cc}
$i_4$&$i_3$\\
$j_4$&$j_3$
\end{tabular}
\right|$,
$\left|\begin{tabular}{cc}
$i_2$&$i_3$\\
$j_2$&$j_3$
\end{tabular}
\right|$, 
$\left|\begin{tabular}{cc}
$i_2$&$i_4$\\
$j_2$&$j_4$
\end{tabular}
\right|$ and
$\left|\begin{tabular}{cc}
$i_2$&$i_1$\\
$j_2$&$j_1$
\end{tabular}\right|$.

Let \[M=\left|\begin{tabular}{cc}
$i_2$&$i_3$\\
$j_2$&$j_3$
\end{tabular}
\right|/d-\left|\begin{tabular}{cc}
$i_2$&$i_4$\\
$j_2$&$j_4$
\end{tabular}
\right|/d\] and 
\[N=\left|\begin{tabular}{cc}
$i_2$&$i_1$\\
$j_2$&$j_1$
\end{tabular}
\right|/d.\] Let also $\xi_M$ and $\xi_N$ be  a primitive $M$-th and $N$-th root of unity.

\begin{align*}
&\left|\begin{tabular}{cc}
$i_2$&$i_3$\\
$j_2$&$j_3$
\end{tabular}
\right|\int_T\sum_{n_1,n_2,n_3=1}^\infty\sum_{n_4=0}^\infty
-\frac{2}{n_1}\frac{(n_3+n_4)!}{n_3!n_4!}\prod_{k=1}^4a_k^{n_k}x^{i_kn_k}y^{j_kn_k}
\frac{dx}{x}\wedge\frac{dy}{y}=\\
&
=-d
\left|\begin{tabular}{cc}
$i_2$&$i_3$\\
$j_2$&$j_3$
\end{tabular}
\right|\sum_{k_1=1}^\infty\sum_{k_4=0}^\infty\frac{1}{k_1
\left|\begin{tabular}{cc}
$i_2$&$i_3$\\
$j_2$&$j_3$
\end{tabular}
\right|}a_1^{n_1}a_2^{n_2}a_3^{n_3}a_4^{n_4}\frac{(n_3+n_4)!}{n_3!n_4!}=\\
&=-d\sum_{k_1=1}^\infty\sum_{k_4=0}^\infty\frac{1}{k_1}
a_1^{n_1}a_2^{n_2}(a_3+a_4)^{n_3+n_4}=\\
&=-d\sum_{k_1=1}^\infty\sum_{k_4=0}^\infty\frac{1}{k_1}
a_1^{n_1}a_2^{n_2}(a_3+a_4)^{n_3+n_4}=\\
&=-d\sum_{k_1=1}^\infty\sum_{k_4=0}^\infty\frac{1}{k_1}
a_1^{\left|\begin{tabular}{cc}
$i_2$&$i_3$\\
$j_2$&$j_3$
\end{tabular}
\right|k_1/d}
a_2^{
\left(-\left|\begin{tabular}{cc}
$i_4$&$i_3$\\
$j_4$&$j_3$
\end{tabular}
\right|k_4
-\left|\begin{tabular}{cc}
$i_2$&$i_3$\\
$j_2$&$j_3$
\end{tabular}
\right|k_1\right)/d
}\times\\
&\times
\sum_{m_1,m_2=1}^M(\xi_M^{m_1}a_3+\xi_M^{m_2}a_4))^{
\left(\left|\begin{tabular}{cc}
$i_2$&$i_3$\\
$j_2$&$j_3$
\end{tabular}
\right|k_4
-\left|\begin{tabular}{cc}
$i_2$&$i_4$\\
$j_2$&$j_4$
\end{tabular}
\right|k_4
\right)/d}\times\\
&\times
\sum_{n=1}^N(\xi_N^{n_1}a_3+\xi_N^{n_2}a_4))^{
-\left|\begin{tabular}{cc}
$i_2$&$i_1$\\
$j_2$&$j_1$
\end{tabular}
\right|k_1/d}=\\
\\
&=
\sum_{m_1,m_2=1}^M
\left(1-a_2^{
\left(-\left|\begin{tabular}{cc}
$i_4$&$i_3$\\
$j_4$&$j_3$
\end{tabular}
\right|\right)/d}
(\xi_M^{m_1}a_3+\xi_M^{m_2}a_4)^{
\left(\left|\begin{tabular}{cc}
$i_2$&$i_3$\\
$j_2$&$j_3$
\end{tabular}
\right|
-\left|\begin{tabular}{cc}
$i_2$&$i_4$\\
$j_2$&$j_4$
\end{tabular}
\right|\right)/d}\right)^{-1}\times\\
&d\times
\sum_{n_1,n_2=1}^N
\log\left(
1-a_1^{
\left|\begin{tabular}{cc}
$i_2$&$i_3$\\
$j_2$&$j_3$
\end{tabular}
\right|/d
}\right.
a_2^{
-\left|\begin{tabular}{cc}
$i_2$&$i_3$\\
$j_2$&$j_3$
\end{tabular}
\right|/d
}
\left.(\xi_N^{n_1}a_3+\xi_N^{n_2}a_4)
^
{-\left|
\begin{tabular}{cc}
$i_2$&$i_1$\\
$j_2$&$j_1$
\end{tabular}
\right|
/d}
\right)
\end{align*}

\renewcommand{\em}{\textrm}

\begin{small}

\renewcommand{\refname}{ {\flushleft\normalsize\bf{References}} }
    
\end{small}
Ivan E. Horozov\\
\begin{small}
Washington University in St Louis\\
Department of Mathematics\\
Campus Box 1146\\
One Brookings Drive\\
St Louis, MO 63130\\
USA\\
\end{small}
\\
Zhenbin Luo
\begin{small}
\end{small}

\end{document}